\theoremstyle{remark}
\newtheorem*{rem*}{\protect\remarkname}
\providecommand{\remarkname}{Remark}
\theoremstyle{plain}
 \newtheorem{theorem}{\protect\theoremname}[section]
 \newtheorem{corollary}[theorem]{\protect\corollaryname}
 \newtheorem{proposition}[theorem]{\protect\propositionname}
 \newtheorem{algorithm}[theorem]{Algorithm}
\theoremstyle{definition}
  \newtheorem{example}[theorem]{\protect\examplename}
\theoremstyle{remark}
  \newtheorem{remark}[theorem]{\protect\remarkname}
\providecommand{\definitionname}{Definition}
\providecommand{\examplename}{Example}
\providecommand{\theoremname}{Theorem}
\providecommand{\corollaryname}{Corollary}
\providecommand{\propositionname}{Proposition}
\providecommand{\lemmaname}{Lemma}
\newcounter{FNC}[page]
\def\fauxfootnote#1{{\addtocounter{FNC}{2}${\color{magenta}^\fnsymbol{FNC}}$%
     \let\thefootnote\relax\footnotetext{{\color{magenta}$^\fnsymbol{FNC}$#1}}}}
\def\defcolor#1{{\color{blue}{#1}}}
\def\demph#1{\defcolor{{\sl #1}}}
\newcommand{\bC}{\mathbb C}
\newcommand{\bG}{\mathbb G}
\newcommand{\bK}{\mathbb K}
\newcommand{\bL}{\mathbb L}
\newcommand{\bP}{\mathbb P}
\newcommand{\bQ}{\mathbb Q}
\newcommand{\bR}{\mathbb R}
\newcommand{\CP}{{\it CP}}
\newcommand{\bv}{{\bf v}}
\newcommand{\bw}{{\bf w}}
\newcommand{\cD}{\mathcal D}
\newcommand{\cG}{\mathcal G}
\newcommand{\cM}{\mathcal M}
\newcommand{\cL}{\mathcal L}
\newcommand{\cO}{\mathcal O}
\newcommand{\cV}{\mathcal V}
\DeclareMathOperator{\Wr}{Wr}
\begin{document}

\title{Numerical computation of Galois groups}

\author[J.~D.~Hauenstein]{Jonathan D.~Hauenstein}
\address{Department of Applied \& Computational Mathematics \& Statistics\\
         University of Notre Dame\\
         Notre Dame, IN  46556\\         
         USA}
\email{hauenstein@nd.edu}
\urladdr{\url{http://www.nd.edu/\~jhauenst}}

\author[J.~I.~Rodriguez]{Jose Israel Rodriguez}
\address{Department of Statistics\\
         University of Chicago\\
         Chicago, IL 60637\\         
         USA}
\email{JoIsRo@uchicago.edu}
\urladdr{\url{http://home.uchicago.edu/\~joisro}}

\author[F.~Sottile]{Frank Sottile}
\address{Department of Mathematics\\
         Texas A\&M University\\
         College Station, TX 77843\\
         USA}
\email{sottile@math.tamu.edu}
\urladdr{\url{http://www.math.tamu.edu/\~sottile/}}
%%%%%%%%%%%%%%%%%%%%%%%%%%%%%%%%%%%%%%%%%%%%%%%%%%%%%%%%%%%%%%%%%%%%%%%%%%%%%%%%%

\thanks{Research of Hauenstein supported in part by NSF grant ACI-1460032, Sloan Research
         Fellowship, and Army Young Investigator Program (YIP)} 
\thanks{Research of Rodriguez supported in part by NSF grant DMS-1402545}         
\thanks{Research of Sottile supported in part by NSF grant DMS-1501370}

%%%%%%%%%%%%%%%%%%%%%%%%%%%%%%%%%%%%%%%%%%%%%%%%%%%%%%%%%%%%%%%%%%%%%%%%%%%%%%%%%
\begin{abstract}
 The Galois/monodromy group of a family of geometric problems or equations is a subtle invariant that encodes 
 the structure of the solutions.
 Computing monodromy permutations using numerical algebraic geometry gives information about the group, but can
 only determine it when it is the full symmetric group.
 We give numerical methods to compute the Galois group and study it when it is not the full
% `structure' was in two adjacent sentences  -FS (Porto-Nuovo, Benin)
 symmetric group.
 One algorithm computes generators while the other gives information on its structure as a permutation group.
 We illustrate these algorithms with examples using a {\tt Macaulay2} package we are developing
 that relies upon {\tt Bertini} to perform monodromy computations.
%AMS Subject Classification 2010: 65H10, 65H20, 14Q15.
\end{abstract}
%%%%%%%%%%%%%%%%%%%%%%%%%%%%%%%%%%%%%%%%%%%%%%%%%%%%%%%%%%%%%%%%%%%%%%%%%%%%%%%%%
\maketitle

%%%%%%%%%%%%%%%%%%%%%%%%%%%%%%%%%%%%%%%%%%%%%%%%%%%%%%%%%%%%%%%%%%%%%%%%%%%%%%%%%
\section{Introduction}
Galois groups, which are a pillar of number theory and arithmetic geometry, encode the structure of field
extensions.
For example, the Galois group of the cyclotomic extension of~$\bQ$ given by the polynomial
$x^4+x^3+x^2+x+1$ is the cyclic group of order four, and not the full symmetric group.  
A finite extension $\bL/\bK$, where $\bK$ has transcendence degree~$n$ over~$\bC$, corresponds to a branched
cover $f\colon V\to U$ of complex algebraic varieties of dimension~$n$, with~$\bL$ the function field of $V$
and $\bK$ the function field of $U$.
The Galois group of the Galois closure of $\bL/\bK$ equals the monodromy group of the branched
cover~\cite{Harris79,Hermite}. 
When~$U$ is \mbox{rational, $f\colon V\to U$}~may be realized as a family of polynomial systems rationally parameterized
by points of $U$.   
Applications of algebraic geometry and enumerative geometry are sources of such families.
For these, internal structure such as numbers of real solutions and symmetry of the original problem
are encoded in the Galois/monodromy group.

Computing monodromy is a fundamental operation in numerical algebraic geometry.
Computing monodromy permutations along randomly chosen loops in the base $U$ was used in~\cite{LS09}
to show that several Schubert problems had Galois/monodromy group the full symmetric group.
Leaving aside the defect of that computation---the continuation (and hence the monodromy permutations) was not
certified---this method only computes an increasing sequence of subgroups of the Galois group, and thus only determines the Galois group when it is the full symmetric group.
In all other cases, this method lacks a stopping criterion.

We offer two additional numerical methods to obtain certifiable information about Galois groups and
investigate their efficacy.
The first method is easiest to describe when $U$ is a rational curve so that $\bK=\bC(t)$, the field of
rational functions. 
Then $V$ is an algebraic curve $C$ equipped with a dominant map $f\colon C\to\bC$ whose fiber at $t\in\bC$
consists of solutions to a polynomial system that depends upon $t$.
This is a degree $k$ cover outside the branch locus $B$, which is a finite subset of $\bC$.
The monodromy group of $f\colon C\to \bC$ is generated by permutations coming from loops encircling each
branch point.

Our second method uses numerical irreducible decomposition of the $s$-fold fiber product to determine 
orbits of the monodromy group acting on $s$-tuples of distinct points in a fiber.
When $s=k{-}1$, this computes the Galois group.
The partial information obtained when $s<k{-}1$ may be sufficient to determine the Galois group.

We illustrate these methods.
The irreducible polynomial $x^4-4x^2+t$ over $\bC(t)$ defines a curve \defcolor{$C$} in 
$\bC_x\times\bC_t$ whose projection $C\to\bC_t$ is four-to-one for $t\not\in B=\{0,4\}$.
The fiber above the point $t=3$ is $\{-\sqrt{3},-1,1,\sqrt{3}\}$.
%
% Should this say "The fiber above the point $t=3$ has $x$-coordinates $\{-\sqrt{3},-1,1,\sqrt{3}\}$" ?
%
%  No. Technically, the points are pairs (3,-\sqrt{3}), ..., but it is customary to identify such a fiber
%       with \bC_x (in this case), and to do so otherwise is cumbersome, and would require a lot of revision
%       of what comes below.   E.g. in discussing the fibers of a fiber product.
%
Following these points along a loop in $\bC_t$ based at $t=3$ that encircles the branch
point $t=0$ gives the $2$-cycle $(-1,1)$. 
A loop encircling the branch point $t=4$ gives the product of
$2$-cycles, $(-\sqrt{3},-1)(1,\sqrt{3})$.
These permutations generate the Galois group, which is isomorphic to the dihedral group $D_4$ and has order 8. 
%%%%%%%%%%%%%%%%%%%%%%%%%%%%%%%%%%%%%%%%%%%%%%%%%%%%%%%%%%%%%%%%%%%%%%%%%%%%%%%%%
\begin{figure}[htb]
  \begin{picture}(135.5,117)(0.5,0.5)
    \put(-0.5,-0.5){\includegraphics{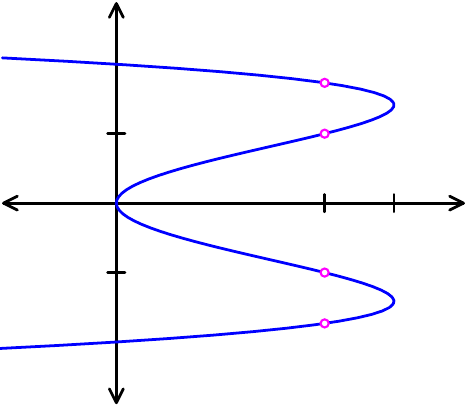}}
    \put(90,45){\small$3$}\put(110,45){\small$4$}
    \put(124,62){\small$t$} \put(36,105){\small$x$} 
    \put(15,35){\small$-1$} \put(70,100){\small$C$} 
    \put(23.5,75){\small$1$}
  \end{picture}
   \qquad \qquad
  \begin{picture}(117,117)(0.5,0.5)
    \put(-0.5,-0.5){\includegraphics{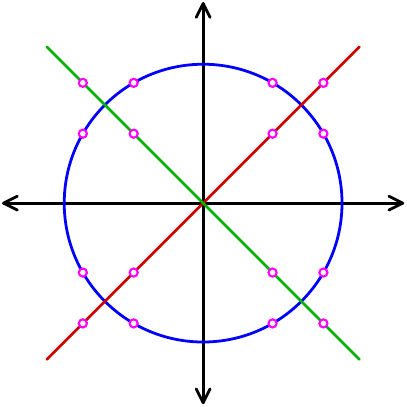}}
    \put(106,62){\small$x$} \put(60,105){\small$y$} 
  \end{picture}
 \caption{Curve $C$ over $\bC_t$ and fiber of $C\times_{\bC_t}C$ over $t=3$.}
 \label{F:first_example}
\end{figure}
%%%%%%%%%%%%%%%%%%%%%%%%%%%%%%%%%%%%%%%%%%%%%%%%%%%%%%%%%%%%%%%%%%%%%%%%%%%%%%%%%

The fiber product $C\times_{\bC_t}C$ consists of triples $(x,y,t)$, where $x$ and $y$ lie in the
fiber of $C$ above $t$.
It is defined in $\bC_x\times\bC_y\times\bC_t$ by the polynomials $x^4-4x^2+t$ and $y^4-4y^2+t$.
Since
\[
  (x^4-4x^2+t)-(y^4-4y^2+t)\ =\ 
  (x-y)(x+y)(x^2+y^2-4)\,,
\]
it has three components.
One is the diagonal defined by $x-y$ and $x^4-4x^2+t$.
The off-diagonal consists of two irreducible components, which implies that the action of the Galois group $\cG$ is
not two-transitive. 
One component is defined by $x+y$ and $x^4-4x^2+t$.
Its fiber over $t=3$ consists of the four ordered pairs $(\pm\sqrt{3},\mp\sqrt{3})$ and $(\pm1,\mp1)$, which is 
an orbit of $\cG$ acting on ordered pairs of solutions.
This implies that $\cG$ acts imprimitively as it fixes the partition $\{-\sqrt{3},\sqrt{3}\}\sqcup\{-1,1\}$.
Thus $\cG\subset S_4$ contains no $3$-cycle, so $\cG\subset D_4$.
The third component is defined by $x^2+y^2-4$ and $x^4-4x^2+t$ and its projection to $\bC_t$ has degree eight.
Thus $\cG$ has an orbit of cardinality eight, which implies 
$|\cG|\geq 8$, from which we can conclude that~$\cG$ is indeed the dihedral group $D_4$.\smallskip

The systematic study of Galois groups of families of geometric problems and equations coming from
applications is in its infancy.
Nearly every case we know where the Galois group has been determined exhibits a striking dichotomy
(e.g.,~\cite{BdCS,Harris79,J1870,LS09,MSJ,MNMH,RSSS,SW_double,Va}): either the group acts imprimitively, so
that it fails to be 2-transitive, or it is at least $(k{-}2)$-transitive 
in that it contains the alternating group (but is expected to be the full symmetric group).
The methods we develop here are being used~\cite{MNMH} to further investigate Galois
groups and we expect they will help to develop Galois groups as a tool to study geometric
problems, including those that arise in applications. 

The paper is structured as follows. 
Section~\ref{S:background} introduces the background material including permutation groups, Galois groups, fundamental groups, fiber products, homotopy continuation, and witness sets. 
In Section~\ref{S:Branch}, we discuss the method of computing monodromy by determining the branch locus, 
illustrating this on the classical problem of determining the monodromy group
of the 27 lines on a cubic surface.
In Section~\ref{S:fiber}, we discuss using fiber products to obtain information about the Galois group,
illustrating this method with the monodromy action on the lines on a cubic surface. 
We further illustrate these methods using three examples from applications in Section~\ref{S:examples}, and we
give concluding remarks in Section~\ref{S:conclusion}.

%%%%%%%%%%%%%%%%%%%%%%%%%%%%%%%%%%%%%%%%%%%%%%%%%%%%%%%%%%%%%%%%%%%%%%%%%%%%%%%%%
\section{Galois groups and numerical algebraic geometry}\label{S:background}

We describe some background, including permutation groups, Galois/monodromy groups, and
fundamental groups of hypersurface complements from classical algebraic geometry, as well as the topics from
numerical algebraic geometry of homotopy continuation, monodromy, witness sets, fiber products, and
numerical irreducible decomposition.

%%%%%%%%%%%%%%%%%%%%%%%%%%%%%%%%%%%%%%%%%%%%%%%%%%%%%%%%%%%%%%%%%%%%%%%%%%%%%%%%%
\subsection{Permutation groups}\label{SS:permutation_groups}
Let $\cG\subset S_k$ be a subgroup of the symmetric group on $k$ letters.
Then $\cG$ has a faithful action on $\defcolor{[k]}:=\{1,\dotsc,k\}$.
For $g\in\cG$ and $i\in[k]$, write $g(i)$ for the image of $i$ under $g$.
We say that $\cG$ is \demph{transitive} if for any $i,j\in[k]$ there is an element $g\in\cG$
with $g(i)=j$. 
Every group is transitive on some set, e.g.,~on itself by left multiplication.

The group $\cG$ has an induced action on $s$-tuples, \defcolor{$[k]^s$}.
The action of $\cG$ is \demph{$s$-transitive} if for any two $s$-tuples
$(i_1,\dotsc,i_s)$ and $(j_1,\dotsc,j_s)$ each having distinct elements, 
there is a $g\in\cG$ with $g(i_r)=j_r$ for $r=1,\dotsc,s$.
The full symmetric group $S_k$ is $k$-transitive and its alternating subgroup \defcolor{$A_k$} of even
permutations is $(k{-}2)$-transitive.
There are few other highly transitive groups.
This is explained in~\cite[\S~4]{Cameron} and summarized in the following proposition, which follows from the
O'Nan-Scott Theorem~\cite{OS} and the classification of finite simple groups.

%%%%%%%%%%%%%%%%%%%%%%%%%%%%%%%%%%%%%%%%%%%%%%%%%%%%%%%%%%%%%%%%%%%%%%%%%%%%%%%%%
\begin{proposition}[Thm.~4.11~\cite{Cameron}]\label{P:Cameron}
 The only $6$-transitive groups are the symmetric and alternating groups.
 The only $4$-transitive groups are the symmetric and alternating groups, and the Mathieu groups
 $M_{11}$, $M_{12}$, $M_{23}$, and $M_{24}$.
 All $2$-transitive permutation groups are  known.
\end{proposition}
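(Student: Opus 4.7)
The plan is to reduce the statement to a classification of primitive permutation groups and then invoke both the O'Nan--Scott Theorem and the classification of finite simple groups (CFSG), exactly as signaled before the proposition. I will organize the argument around the degree of transitivity.

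First, I would observe that any $s$-transitive group with $s\geq 2$ is primitive: if $\cG$ preserved a non-trivial block system, it could not move a pair inside a block to a pair crossing two blocks. This reduces the problem to classifying primitive groups that are at least $2$-transitive. Next, I would invoke the O'Nan--Scott Theorem to split the primitive groups $\cG\subseteq S_k$ into the standard types (affine (HA), almost simple (AS), simple diagonal (SD), compound diagonal (CD), product action (PA), and twisted wreath (TW)). A short case analysis on each type--using that the point stabilizer must be transitive on the remaining $k{-}1$ points--rules out everything except the affine and almost simple types. Indeed, the diagonal, product, and twisted wreath types all preserve structures (a factorization of the point set or an equivalence coming from a wreath product) that prevent $2$-transitivity in the ranges relevant here.

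Having reduced to the affine and almost simple cases, I would proceed separately. For an affine $2$-transitive group $\cG\subseteq \mathrm{AGL}(V)$ on a vector space $V=\mathbb{F}_p^n$, the point stabilizer is a subgroup of $\mathrm{GL}(V)$ acting transitively on $V\setminus\{0\}$; an elementary counting / fixed-point argument then shows $\cG$ is never $4$-transitive (and $3$-transitivity happens only for the classical examples $\mathrm{AGL}(n,2)$), so affine groups contribute nothing past $3$-transitivity. For the almost simple case, one applies CFSG: the $2$-transitive almost simple groups have been completely enumerated (Curtis--Kantor--Seitz and successors), giving the short list that underlies the final clause of the proposition. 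Trawling that list for those with $4$-transitive action yields exactly $S_k$, $A_k$, and the Mathieu groups $M_{11}$, $M_{12}$, $M_{23}$, $M_{24}$; demanding $6$-transitivity leaves only $S_k$ and $A_k$, since $M_{12}$ and $M_{24}$ are sharply $5$-transitive and no sporadic or Lie-type group is $6$-transitive.

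The main obstacle is, of course, the almost simple step: it rests entirely on CFSG and on the careful enumeration of $2$-transitive actions of simple groups, neither of which admits a short self-contained argument. The affine and O'Nan--Scott reductions are comparatively routine, so in a write-up I would handle those explicitly and quote Cameron's \textsection 4 (and through it the results of Curtis--Kantor--Seitz and the O'Nan--Scott Theorem~\cite{OS}) for the almost simple enumeration. Since the proposition is stated only to justify a dichotomy observed in the applications that follow, citing~\cite{Cameron} for the detailed list is the appropriate level of detail.
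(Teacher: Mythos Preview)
Your proposal is correct and matches the approach the paper signals: the paper does not give its own proof of this proposition but simply cites it as Theorem~4.11 of~\cite{Cameron}, noting that it follows from the O'Nan--Scott Theorem and the classification of finite simple groups. Your sketch---reducing to primitive groups, applying O'Nan--Scott to isolate the affine and almost simple cases, and then invoking CFSG for the almost simple enumeration---is exactly the standard route and is more detailed than anything the paper provides.
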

%%%%%%%%%%%%%%%%%%%%%%%%%%%%%%%%%%%%%%%%%%%%%%%%%%%%%%%%%%%%%%%%%%%%%%%%%%%%%%%%%

Tables~7.3 and 7.4 in~\cite{Cameron} list the 2-transitive permutation groups.

Suppose that $\cG$ is transitive on $[k]$.
A \demph{block} is a subset $B$ of $[k]$ such that for every $g\in\cG$ either $gB=B$ or $gB\cap B=\emptyset$.
The orbits of a block form a $\cG$-invariant partition of $[k]$ into blocks.
The group $\cG$ is \demph{primitive} if its only blocks are $[k]$ or singletons, otherwise it is
\demph{imprimitive}.
Any 2-transitive permutation group is primitive, and primitive permutation groups that are not symmetric or
alternating are rare---the set of $k$ for which such a nontrivial primitive permutation group exists has density
zero in the natural numbers~\cite[\S~4.9]{Cameron}. 

Each $\cG$-orbit $\cO\subset[k]^2$ determines a graph \defcolor{$\Gamma_\cO$} with vertex set $[k]$---its edges are
the pairs in $\cO$.
For the diagonal orbit $\{(a,a)\mid a\in[k]\}$, this graph is disconnected, consisting of $k$ loops.
Connectivity of all other orbits is equivalent to primitivity (see~\cite[\S~1.11]{Cameron}).

%%%%%%%%%%%%%%%%%%%%%%%%%%%%%%%%%%%%%%%%%%%%%%%%%%%%%%%%%%%%%%%%%%%%%%%%%%%%%%%%%
\begin{proposition}[Higman's Theorem~\cite{Higman}]\label{P:Higman}
 A transitive group $\cG$ is primitive if and only if for each non-diagonal orbit $\cO\subset[k]^2$, the 
 graph $\Gamma_\cO$ is connected.
\end{proposition}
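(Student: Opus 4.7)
The plan is to prove both implications by exploiting the single observation that $\cG$ acts on $\Gamma_\cO$ by graph automorphisms, since $\cO$ is $\cG$-invariant as a set of edges. Because graph automorphisms permute connected components, the partition of $[k]$ into vertex sets of connected components of $\Gamma_\cO$ is $\cG$-invariant.

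For the forward direction, assume $\cG$ is primitive and let $\cO$ be a non-diagonal orbit. First I would observe that, by transitivity of $\cG$ on $[k]$, every vertex of $\Gamma_\cO$ lies on at least one edge: given any $a\in[k]$, pick $(b,c)\in\cO$ with $b\neq c$ and $g\in\cG$ with $g(b)=a$, so $(a,g(c))\in\cO$. Hence no connected component is a singleton. The partition of $[k]$ into connected components of $\Gamma_\cO$ is then a $\cG$-invariant partition whose blocks are not all singletons; by primitivity it must be the trivial partition $\{[k]\}$, so $\Gamma_\cO$ is connected.

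For the converse, I would argue by contrapositive: assume $\cG$ is imprimitive and exhibit a non-diagonal orbit whose graph is disconnected. Let $B\subsetneq[k]$ be a block with $|B|\geq 2$, choose distinct $a,b\in B$, and let $\cO$ be the $\cG$-orbit of $(a,b)$. For any edge $(c,d)\in\cO$, choose $g\in\cG$ with $g(a)=c$, $g(b)=d$; then $c,d\in gB$, which is again a block. Thus every edge of $\Gamma_\cO$ has both endpoints inside a single block of the $\cG$-invariant partition generated by $B$. Since $B$ and $[k]\setminus B$ are both nonempty, no edge of $\Gamma_\cO$ joins $B$ to its complement, so $\Gamma_\cO$ is disconnected.

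There is no real obstacle: the only subtlety worth flagging is the nondegeneracy step in the forward direction (ruling out the all-singletons partition), which is exactly where we use that $\cO$ is non-diagonal together with transitivity of $\cG$. The argument never invokes the classification of primitive groups and works for arbitrary transitive $\cG\subset S_k$.
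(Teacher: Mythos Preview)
Your argument is correct in both directions. The paper itself does not supply a proof of this proposition: it is stated as Higman's Theorem with a citation to~\cite{Higman}, and the preceding sentence points the reader to~\cite[\S~1.11]{Cameron} for the equivalence. So there is no in-paper proof to compare against; your write-up is a self-contained version of the standard argument found in Cameron's text, turning on the fact that connected components of $\Gamma_\cO$ form a system of blocks and, conversely, that the orbit of a pair inside a nontrivial block never leaves that block system.
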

%%%%%%%%%%%%%%%%%%%%%%%%%%%%%%%%%%%%%%%%%%%%%%%%%%%%%%%%%%%%%%%%%%%%%%%%%%%%%%%%%

Imprimitive groups are subgroups of wreath products $S_a\Wr S_b$ with
$ab=k$ and $a,b>1$
where this decomposition comes from the blocks of a $\cG$-invariant partition.
The dihedral group $D_4$ of the symmetries of a square is isomorphic to $S_2\Wr S_2$, with an imprimitive 
action on the vertices---it preserves the partition into diagonals.
More generally, the dihedral group $D_k$ of symmetries of a regular $k$-gon is imprimitive on the vertices
whenever $k$ is composite.
%
% The wreath product isn't reflexive--so should we have an example that reflects this
%   --different notation for wreath product $\propto$.
%
%  There is not a standard notation for Wreath product (looking at a half-dozen text gives 4 different notations!
%    However, this section is referring to a particular book, which is why the notation (and original wording of 
%    the Proposition) was chosen.   These are matters of proper scholarship and concern for the reader.
%

%%%%%%%%%%%%%%%%%%%%%%%%%%%%%%%%%%%%%%%%%%%%%%%%%%%%%%%%%%%%%%%%%%%%%%%%%%%%%%%%%
\subsection{Galois and monodromy groups}\label{SS:Galois_Monodromy}
A map $f\colon V\to U$ between irreducible complex algebraic varieties of the same dimension
with $f(V)$ dense in $U$ is a \demph{dominant} map.
When $f\colon V\to U$ is dominant, the function field $\bC(V)$ of $V$ is a finite extension of $f^*\bC(U)$, the
pullback of the function field of $U$.
This extension has degree $k$, where $k$ is the degree of $f$, which is the cardinality of a general fiber.
The \demph{Galois group $\cG(V{\to}U)$} of $f\colon V\to U$ is the Galois group of the Galois closure of
$\bC(V)$ over $f^*\bC(U)$.

This algebraically defined Galois group is also a geometric monodromy group.
A dominant map $f\colon V\to U$ of equidimensional varieties is a \demph{branched cover}.
The \demph{branch locus $B$} of $f\colon V\to U$ is the set of points
$u\in U$ such that $f^{-1}(u)$ does not consist of $k$ reduced points.
Then $f\colon f^{-1}(U\smallsetminus B)\to U\smallsetminus B$ is a degree $k$ covering space.
The group of deck transformations of this cover is a subgroup of the symmetric group $S_k$ and is
isomorphic to the Galois group $\cG(V{\to}U)$, as permutation groups.
Hermite~\cite{Hermite} realized that Galois and monodromy groups coincide and 
Harris~\cite{Harris79} gave a modern treatment.
The following is elementary.

%%%%%%%%%%%%%%%%%%%%%%%%%%%%%%%%%%%%%%%%%%%%%%%%%%%%%%%%%%%%%%%%%%%%%%%%%%%%%%%%%
\begin{proposition}\label{P:liftLoops}
 Let $u\in U\smallsetminus B$.
 Following points in the fiber $f^{-1}(u)$ along lifts to $V$ of loops in $U\smallsetminus B$ gives a
 homomorphism from the fundamental group $\pi_1(U\smallsetminus B)$ of $U\smallsetminus B$ to the
 set of permutations of $f^{-1}(u)$ whose image is the Galois/monodromy group.
\end{proposition}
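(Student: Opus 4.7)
The plan is to decouple the statement into two parts: constructing the path-lifting homomorphism $\rho$ from $\pi_1(U\smallsetminus B,u)$ to the symmetric group on $f^{-1}(u)$, and identifying its image with the Galois group $\cG(V{\to}U)$. The first part is purely topological; the second invokes the Hermite--Harris correspondence and is the only nontrivial input.

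For the first part, the key preliminary observation is that $f\colon f^{-1}(U\smallsetminus B)\to U\smallsetminus B$ is a degree-$k$ covering space in the analytic topology. This follows from the definition of the branch locus: on the complement of $B$ the map $f$ is an unramified finite morphism of complex manifolds whose every fiber has cardinality $k$, and such maps are topological coverings. The homomorphism $\rho$ then comes out of classical covering-space theory: for a loop $\gamma$ based at $u$ and each $v\in f^{-1}(u)$, unique path-lifting gives a lift $\widetilde\gamma_v$ with $\widetilde\gamma_v(0)=v$; one defines $\rho([\gamma])(v):=\widetilde\gamma_v(1)$; the homotopy lifting property shows this depends only on $[\gamma]$; and concatenation of loops corresponds to composition of permutations of $f^{-1}(u)$, with $\rho([\gamma^{-1}])=\rho([\gamma])^{-1}$.

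The image of $\rho$ is the monodromy group of the cover by definition, so only its identification with the algebraically defined Galois group of $\bC(V)/f^*\bC(U)$ remains. This is the classical correspondence of Hermite~\cite{Hermite} and Harris~\cite{Harris79} referenced in the paragraph preceding the proposition. Its proof constructs the Galois closure of $f$ as a branched cover $\widetilde V\to U$ whose function field is the Galois closure of $\bC(V)/f^*\bC(U)$, and uses Riemann's existence theorem to identify the deck transformation group of $\widetilde V\to U$ over $U\smallsetminus B$ with both the algebraic Galois group and with the image of $\rho$ acting on $f^{-1}(u)$. This matching of analytic and algebraic data is the main obstacle in a fully self-contained argument; however, since the paper explicitly calls the result elementary, in practice one would compress the covering-space step to a single sentence and defer the Hermite--Harris step entirely to the cited literature.
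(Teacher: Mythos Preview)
Your proposal is correct and matches the paper's treatment: the paper gives no proof at all, simply declaring the proposition ``elementary'' and relying on the Hermite--Harris identification of Galois and monodromy groups stated (with citations) in the paragraph immediately preceding it. Your two-step outline---the covering-space construction of the homomorphism, followed by the cited Hermite--Harris correspondence to identify its image with the algebraic Galois group---is exactly the expected expansion of that omitted argument.
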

%%%%%%%%%%%%%%%%%%%%%%%%%%%%%%%%%%%%%%%%%%%%%%%%%%%%%%%%%%%%%%%%%%%%%%%%%%%%%%%%%

There is a purely geometric construction of Galois groups using fiber products (explained
in~\cite[\S~3.5]{Va}).
For each $2\leq s\leq k$ let \defcolor{$V^s_U$} be the  $s$-fold fiber product,
\[
   V^s_U\ :=\ \overbrace{V\times_U V\times_U \dotsb \times_U V}^s\,.
\]
We also write \defcolor{$f$} for the map $V^s_U\to U$.
The fiber of $V^s_U$ over a point $u\in U$ is $(f^{-1}(u))^s$, the set of $s$-tuples of points in $f^{-1}(u)$.
Over $U\smallsetminus B$, $V^s_U$ is a covering space of degree $k^s$. 
This is decomposable, and among its components are those lying in the big diagonal \defcolor{$\Delta$}, where
some coordinates of the $s$-tuples coincide.
We define \defcolor{$V^{(s)}$} to be the closure in $V^s_U$ of $f^{-1}(U\smallsetminus B)\smallsetminus\Delta$.
Then every irreducible component of $V^{(s)}$ maps dominantly to $U$ and its fiber over a point 
$u\in U\smallsetminus B$ consists of $s$-tuples of distinct points of $f^{-1}(u)$.
This may be done iteratively as $V^{(s+1)}$ is the union of components of $V^{(s)}\times_U V$ lying outside of
the big diagonal.

Suppose that $s=k$.
Let $u\in U\smallsetminus B$ and write the elements of $f^{-1}(u)$ in some order,
\[
   f^{-1}(u)\ =\ \{v_1,v_2,\dotsc,v_k\}\,.
\]
The fiber of $V^{(k)}$ over $u$ consists of the $k!$ distinct $k$-tuples $(v_{\sigma(1)},\dotsc,v_{\sigma(k)})$ for
$\sigma$ in the symmetric group $S_k$.

%%%%%%%%%%%%%%%%%%%%%%%%%%%%%%%%%%%%%%%%%%%%%%%%%%%%%%%%%%%%%%%%%%%%%%%%%%%%%%%%%
\begin{proposition}\label{P:fibre}
 The Galois group $\cG(V{\to}U)$ is the subgroup of $S_k$ consisting of all permutations $\sigma$ such that
 $(v_{\sigma(1)},\dotsc,v_{\sigma(k)})$ lies in the same component of $V^{(k)}$ as does $(v_1,\dotsc,v_k)$.
\end{proposition}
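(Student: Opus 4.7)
The plan is to show that the components of $V^{(k)}$ are in bijection with the orbits of the monodromy action on the fiber $(f^{-1}(u))^{(k)}$ of distinct $k$-tuples, and then to identify this orbit structure using the left-multiplication action of the monodromy group on $S_k$. By Proposition~\ref{P:liftLoops} the monodromy group coincides with $\cG(V{\to}U)$, and so the statement will follow.

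First I would fix $u\in U\setminus B$, order the fiber $f^{-1}(u)=\{v_1,\dots,v_k\}$, and identify the fiber of $V^{(k)}$ over $u$ with the symmetric group $S_k$ via $\sigma\mapsto(v_{\sigma(1)},\dots,v_{\sigma(k)})$. For a loop $\gamma$ in $U\setminus B$ based at $u$, lifting $\gamma$ in $V$ starting at each $v_{\sigma(i)}$ produces the endpoint $v_{\pi_\gamma(\sigma(i))}$, where $\pi_\gamma$ is the monodromy permutation attached to $\gamma$. Thus the lift of $\gamma$ in $V^{(k)}$ starting at $(v_{\sigma(1)},\dots,v_{\sigma(k)})$ ends at $(v_{\pi_\gamma\sigma(1)},\dots,v_{\pi_\gamma\sigma(k)})$, so the $\pi_1(U\setminus B,u)$-action on the fiber is, under our identification, left multiplication of $S_k$ by the monodromy group $\cG:=\{\pi_\gamma\}$.

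Next I would use that over $U\setminus B$, the restriction $V^{(k)}\to U\setminus B$ is an unramified covering of degree $k!$, so the path-components of $V^{(k)}\cap f^{-1}(U\setminus B)$ over $U\setminus B$ correspond to $\pi_1$-orbits on the fiber. By the previous paragraph these orbits are the left cosets of $\cG$ in $S_k$, and the orbit containing the identity is exactly $\cG$ itself. Consequently the $(v_{\sigma(1)},\dots,v_{\sigma(k)})$ in the same path-component as $(v_1,\dots,v_k)$ over $U\setminus B$ are precisely those indexed by $\sigma\in\cG$.

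Finally I would promote "same path-component over $U\setminus B$" to "same irreducible component of $V^{(k)}$". Since $V^{(k)}$ is defined as the closure in $V^k_U$ of $f^{-1}(U\setminus B)\setminus\Delta$, each of its irreducible components maps dominantly to $U$ and restricts over $U\setminus B$ to a nonempty open subset of the étale cover; irreducibility of a complex variety forces this restriction to be a single path-component, and conversely the closure of any path-component is an irreducible subvariety of $V^{(k)}$ dominating $U$, giving a bijection between irreducible components of $V^{(k)}$ and path-components of the cover. The main technical point, and the one I expect to require the most care, is this last step: verifying that closures of the path-components over $U\setminus B$ are exactly the irreducible components of $V^{(k)}$, which rests on the standard fact that a smooth (here étale) piece of a complex algebraic variety is irreducible iff it is connected.
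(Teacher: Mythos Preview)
Your proposal is correct and is essentially the argument the paper uses: the paper does not give a standalone proof of this proposition (offering only the one-line remark that the function field of a component of $V^{(k)}$ is the Galois closure), but its proof of the subsequent Proposition~\ref{P:orbits} is exactly your monodromy/path-lifting argument, and the present statement is the case $s=k$ together with your left-coset identification. The only difference is that you make the passage from path-components over $U\smallsetminus B$ to irreducible components of $V^{(k)}$ more explicit than the paper does.
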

%%%%%%%%%%%%%%%%%%%%%%%%%%%%%%%%%%%%%%%%%%%%%%%%%%%%%%%%%%%%%%%%%%%%%%%%%%%%%%%%%

The function field of any component of $V^{(k)}$ is the Galois closure of $\bC(V)$ over $f^*\bC(U)$,
and the construction of $V^{(k)}$ is the geometric counterpart of the usual 
construction of a Galois closure by adjoining successive roots of an irreducible polynomial.
Proposition~\ref{P:fibre} implies that we may read off the Galois group from any irreducible component of
$V^{(k)}$.
In fact $V^{(k-1)}$ will suffice as $V^{(k)}\simeq V^{(k-1)}$.
(Knowing $k{-}1$ points from $\{v_1,\dotsc,v_k\}$ determines the $k$th.)
Other properties of $\cG$ as a permutation group may be read off from these fiber products.

%%%%%%%%%%%%%%%%%%%%%%%%%%%%%%%%%%%%%%%%%%%%%%%%%%%%%%%%%%%%%%%%%%%%%%%%%%%%%%%%%
\begin{proposition}\label{P:orbits}
 The irreducible components of $V^{(s)}$ correspond to orbits of $\cG$ acting on 
 \mbox{$s$-tuples} of distinct points.
 In particular, $\cG$ is $s$-transitive if and only if $V^{(s)}$ is irreducible. 
\end{proposition}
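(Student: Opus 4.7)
The plan is to transfer the statement from the algebro-geometric world of irreducible components to the topological world of a covering space, where the correspondence with monodromy orbits is standard, and then invoke Proposition~\ref{P:liftLoops} to identify the monodromy group with the Galois group $\cG$.

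First I would fix a base point $u\in U\smallsetminus B$. By construction, $V^{(s)}$ is the closure in $V^s_U$ of the preimage $f^{-1}(U\smallsetminus B)\smallsetminus\Delta$, which is a covering space of $U\smallsetminus B$ whose fiber over $u$ is the set of $s$-tuples of distinct points of $f^{-1}(u)$. Since $U\smallsetminus B$ is connected (as $U$ is irreducible and $B$ is a proper closed subvariety) and the irreducible components of $V^{(s)}$ are disjoint from each other after removing their pairwise intersections in $f^{-1}(B)$, the irreducible components of $V^{(s)}$ are in natural bijection with the connected components of the restriction $V^{(s)}|_{U\smallsetminus B}$. This step uses the general fact that the closure of a connected component of the smooth locus of a variety is an irreducible component, and conversely, each irreducible component dominating $U$ meets $U\smallsetminus B$ in a single connected component of the cover.

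Next I would invoke the standard correspondence from covering space theory: the connected components of a covering space of a connected, locally path-connected space $U\smallsetminus B$ are in bijection with the orbits of $\pi_1(U\smallsetminus B,u)$ acting on the fiber over $u$. Here the fiber over $u$ is the set of $s$-tuples of distinct points of $f^{-1}(u)$, and the $\pi_1$-action is by lifting loops coordinate-wise. By Proposition~\ref{P:liftLoops}, the image of $\pi_1(U\smallsetminus B,u)$ in the permutation group of $f^{-1}(u)$ is exactly $\cG$, and hence the induced action on $s$-tuples of distinct points factors through the induced action of $\cG$ on such $s$-tuples. Concatenating the two bijections gives the desired correspondence between irreducible components of $V^{(s)}$ and $\cG$-orbits on ordered $s$-tuples of distinct elements of $[k]$.

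The second sentence then follows immediately: $V^{(s)}$ is irreducible precisely when there is a single orbit, which is the definition of $s$-transitivity given in Section~\ref{SS:permutation_groups}. The main obstacle I expect is the clean passage between ``irreducible component of a possibly singular variety'' and ``connected component of an unbranched cover,'' specifically verifying that no two distinct irreducible components of $V^{(s)}$ can share a connected component of the cover over $U\smallsetminus B$ and that every irreducible component of $V^{(s)}$ dominates $U$. Both follow from the definition of $V^{(s)}$ as the closure of $f^{-1}(U\smallsetminus B)\smallsetminus\Delta$, together with the fact that the restriction of a finite \'etale cover to a connected base decomposes uniquely into connected (hence irreducible) covers.
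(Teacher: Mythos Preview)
Your proposal is correct and is essentially the same argument as the paper's, just packaged at a higher level of abstraction: where the paper explicitly builds a path in $X\smallsetminus f^{-1}(B)$ between two fiber points, projects it to a loop in $U\smallsetminus B$, and lifts back to obtain the monodromy element (and conversely), you invoke the standard covering-space correspondence between connected components and $\pi_1$-orbits and then cite Proposition~\ref{P:liftLoops}. The only substantive step you add is the passage between irreducible components of $V^{(s)}$ and connected components of its restriction over $U\smallsetminus B$, which you handle correctly using that $V^{(s)}$ is by definition the closure of an \'etale cover of $U\smallsetminus B$.
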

%%%%%%%%%%%%%%%%%%%%%%%%%%%%%%%%%%%%%%%%%%%%%%%%%%%%%%%%%%%%%%%%%%%%%%%%%%%%%%%%%

%%%%%%%%%%%%%%%%%%%%%%%%%%%%%%%%%%%%%%%%%%%%%%%%%%%%%%%%%%%%%%%%%%%%%%%%%%%%%%%%%
\begin{proof}
 This is essentially Lemma~1 of~\cite{SW_double}.
 Let $u\in U\smallsetminus B$ and suppose that $v:=(v_1,\dotsc,v_s)$ and  $v':=(v'_1,\dotsc,v'_s)$ are points in
 the fiber in $V^s_U$ above $u$ that lie in the same irreducible component $X$.
 Let $\sigma$ be a path in $X\smallsetminus f^{-1}(B)$ connecting $v$ to $v'$.
 Then $f(\sigma)=\gamma$ is a loop in $U\smallsetminus B$ based at $u$.
 Lifting $\gamma$ to $V$ gives a monodromy permutation $g\in\cG$ with the property that $g(v_i)=v'_i$ for
 $i=1,\dotsc,s$.
 Thus $v$ and $v'$ lie in the same orbit of $\cG$ acting on $s$-tuples of points of $V$ in the fiber $f^{-1}(u)$.  

 Conversely, let $v_1,\dotsc,v_s\in V$ be points in a fiber above $u\in U\smallsetminus B$ and let $g\in\cG$.
 There is a loop $\gamma\subset U\smallsetminus B$ that is based at $U$ and whose lift to $V$ gives the
 action of $g$ on $f^{-1}(u)$.
 Lifting $\gamma$ to $V^s_U$ gives a path connecting the two points $(v_1,\dotsc,v_s)$ and 
 $(g(v_1),\dotsc,g(v_s))$ in the fiber above $u$, showing that they lie in the same component of $V^s_U$.
 Restricting to $s$-tuples of distinct points establishes the proposition.
\end{proof}
%%%%%%%%%%%%%%%%%%%%%%%%%%%%%%%%%%%%%%%%%%%%%%%%%%%%%%%%%%%%%%%%%%%%%%%%%%%%%%%%%

%%%%%%%%%%%%%%%%%%%%%%%%%%%%%%%%%%%%%%%%%%%%%%%%%%%%%%%%%%%%%%%%%%%%%%%%%%%%%%%%%
\subsection{Fundamental groups of complements}\label{SS:FundamentalGroups}
Classical algebraic geometers studied the fundamental group $\pi_1(\bP^n\smallsetminus B)$ of the complement of
a hypersurface $B\subset\bP^n$.
Zariski~\cite{Zar} showed that if $\Pi$ is a general two-dimensional
linear subspace of $\bP^n$, then the inclusion 
$\iota\colon \Pi\smallsetminus B\to\bP^n\smallsetminus B$ induces an isomorphism of fundamental groups,
 \begin{equation}\label{Eq:Lefschetz_I}
  \iota_*\ \colon\  \pi_1(\Pi\smallsetminus B)
      \ \xrightarrow{\ \sim\ }\ \pi_1(\bP^n\smallsetminus B)\,.
 \end{equation}
(As the complement of $B$ is connected, we omit base points in our notation.)
Consequently, it suffices to study fundamental groups of complements of plane curves $C\subset\bP^2$.
Zariski also showed that if $\ell$ is a line meeting $B$ in $d=\deg B$ distinct points, so that the intersection
is transverse, then the natural map of fundamental groups
 \[
  \iota_*\ \colon\  \pi_1(\ell\smallsetminus B)
      \ \relbar\joinrel\twoheadrightarrow\ \pi_1(\bP^n\smallsetminus B)
 \]
is a surjection.
(See also~\cite[Prop.~3.3.1]{Dimca}.)

We recall some facts about $\pi_1(\ell\smallsetminus B)$.
Suppose that $B\cap\ell=\{b_1,\dotsc,b_d\}$ and that $p\in\ell\smallsetminus B$ is our base point.
For each $i=1,\dotsc,d$, let $D_i$ be a closed disc in $\ell\simeq\bC\bP^1$ centered at $b_i$ with 
$D_i\cap B=\{b_i\}$.
Choose any path in $\ell\smallsetminus B$ from $p$ to the boundary $\partial D_i$ of $D_i$ and 
let~$\gamma_i$ be the loop based at $p$ that follows that path, traverses the boundary of $D_i$ once
anti-clockwise, and then returns to $p$ along the chosen path.
Any loop in $\ell\smallsetminus B$ based at $p$ that is homotopy-equivalent to $\gamma_i$ (for some choice of
path from $p$ to $\partial D_i$) is a (based) loop in  $\ell\smallsetminus B$ 
\demph{encircling~$b_i$}.
The fundamental group $\pi_1(\ell\smallsetminus B)$ is a free group freely generated by loops encircling any $d{-}1$ points of $B\cap\ell$.
We record the consequence of Zariski's result~that~we~will~use.

%%%%%%%%%%%%%%%%%%%%%%%%%%%%%%%%%%%%%%%%%%%%%%%%%%%%%%%%%%%%%%%%%%%%%%%%%%%%%%%%%
\begin{proposition}\label{P:Zariski}
 Let $B\subset\bP^n$ be a hypersurface.
 If $\ell\subset\bP^n$ is any line that meets $B$ in finitely many reduced points, then
 a set of based loops in $\ell$ encircling each of these points generate the fundamental group of the
 complement, $\pi_1(\bP^n\smallsetminus B)$.
\end{proposition}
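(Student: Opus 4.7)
The plan is to deduce this from the two Zariski results already recalled just above the proposition, combined with the description of $\pi_1(\ell\smallsetminus B)$ as a free group on loops around punctures.

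First I would dispose of the hypothesis. Since $B$ is a hypersurface and $\ell\cap B$ is finite, $\ell$ is not a component of $B$; hence by Bezout the intersection number of $\ell$ with $B$ equals $d:=\deg B$. The assumption that every point of $\ell\cap B$ is reduced forces each to contribute multiplicity one, so $\ell\cap B$ consists of exactly $d$ distinct points and the intersection is transverse. This places $\ell$ in the situation covered by the second Zariski theorem quoted in the paragraph preceding the proposition.

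Next, I would apply that theorem directly to obtain a surjection
\[
 \iota_*\ \colon\ \pi_1(\ell\smallsetminus B)\ \relbar\joinrel\twoheadrightarrow\ \pi_1(\bP^n\smallsetminus B).
\]
By the description of $\pi_1(\ell\smallsetminus B)$ also recorded in the excerpt, this group is free, freely generated by loops encircling any $d-1$ of the $d$ points of $\ell\cap B$; in particular the set of all $d$ loops encircling each of the $d$ intersection points is a (redundant) generating set. The image of this set under $\iota_*$ is therefore a generating set for $\pi_1(\bP^n\smallsetminus B)$, which is exactly the claim.

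The main obstacle is conceptually trivial but worth stating carefully: one has to verify that the phrase "finitely many reduced points" in the proposition is equivalent to the transversality hypothesis "$d$ distinct points" appearing in Zariski's formulation, so that the cited surjectivity result applies to every such line rather than only to a generic line. Once this Bezout bookkeeping is done, the proof is essentially a transcription of Zariski's theorem together with the standard free generators of a punctured $\bP^1$.
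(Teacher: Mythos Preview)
Your proposal is correct and follows exactly the route the paper takes: the paper presents Proposition~\ref{P:Zariski} simply as ``the consequence of Zariski's result that we will use,'' with the proof being the preceding paragraph (the surjection $\iota_*$ together with the free generators of $\pi_1(\ell\smallsetminus B)$). Your B\'ezout step, showing that ``finitely many reduced points'' forces exactly $d=\deg B$ transverse intersection points, is a helpful detail that the paper leaves implicit; otherwise the arguments coincide.
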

%%%%%%%%%%%%%%%%%%%%%%%%%%%%%%%%%%%%%%%%%%%%%%%%%%%%%%%%%%%%%%%%%%%%%%%%%%%%%%%%%

%%%%%%%%%%%%%%%%%%%%%%%%%%%%%%%%%%%%%%%%%%%%%%%%%%%%%%%%%%%%%%%%%%%%%%%%%%%%%%%%%
\subsection{Homotopy continuation and monodromy}
Numerical algebraic geometry~\cite{bertinibook,SW05} uses numerical analysis to study algebraic varieties on a
computer. 
We present its core algorithms of Newton refinement and continuation, and explain how they are used to compute
monodromy. 
Let $F\colon\bC^n\to\bC^n$ be a polynomial map with $F^{-1}(0)$ consisting of finitely many reduced
points.
To any $x\in\bC^n$ that is not a critical point of $F$ so that the Jacobian matrix 
$JF(x)$ of $F$ at $x$ is
nonsingular, we may apply a Newton step
\[
   \defcolor{N_F(x)}\ :=\ x-JF(x)^{-1}\cdot F(x)\,.
\]
If $x$ is sufficiently close to a zero $x^*$ of $F$, then $N_F(x)$ is closer still in that the sequence
defined by $x_0:=x$ and $x_{i+1}:=N_F(x_i)$ for $i\geq 0$ satisfies $\|x^*-x_i\|<2^{1-2^i}\|x^*-x\|$.
%---in this
%case we call $x$ an approximate solution to $F$ and say that $\{x_i\mid i\geq 0\}$ converges quadratically to
%$x^*$.\footnote{Remove mention of approximate solution is not used.}

A homotopy $H$ is a polynomial map $H\colon\bC^n\times\bC_t\to\bC^n$ that defines a curve $C\subset H^{-1}(0)$
which maps dominantly to $\bC_t$. 
Write $f\colon C\to\bC_t$ for this map.
%We may also consider $H$ to be a family of polynomial maps $H(-,t)$ for $t\in\bC_t$.
%
%  Frank thinks that we can deprecate thinking of $H$ as a family of polynomial systems/maps
%
We assume that the inverse image $f^{-1}[0,1]$ in $C$ of the interval $[0,1]$ is a collection of arcs connecting
the points of $C$ above $t=1$ to points above $t=0$ which are smooth at $t\neq 0$.
Given a point $(x,1)$ of $C$, standard predictor-corrector methods (e.g.\ Euler tangent prediction followed
by Newton refinement) construct a sequence of points $(x_i,t_i)$ where $x_0=x$ and $1=t_0>t_1>\dotsb>t_s=0$ on
the arc containing $(x,1)$.
This computation of the points in $f^{-1}(0)$ %=H(-,0)^{-1}$ 
from points of $f^{-1}(1)$ %=H(-,1)^{-1}$
by continuation along the arcs  $f^{-1}[0,1]$ is called \demph{numerical homotopy continuation}.
Numerical algebraic geometry uses homotopy continuation to solve systems of polynomial equations and to study
algebraic varieties.
While we will not describe methods to solve systems of equations, we will describe other methods of numerical
algebraic geometry.

When $U$ is rational, a branched cover $f\colon V\to U$ gives homotopy paths.
Given a map $g\colon\bC_t\to U$ whose image is not contained in the branch locus $B$ of $f$, the pullback
$g^*V$ is a curve $C$ with a dominant map to $\bC_t$.
Pulling back equations for $V$ gives a homotopy for tracking points of $C$.
We need not restrict to arcs lying over the interval $[0,1]$, but may instead take any path
$\gamma\subset\bC_t$ (or in $U$) that does not meet the branch locus. 
When~$\gamma\subset U\smallsetminus B$ is a loop based at a point $u\in U\smallsetminus B$, homotopy
continuation along $f^{-1}(\gamma)$ starting at~$f^{-1}(u)$ computes the monodromy permutation of $f^{-1}(u)$
given by the homotopy class of $g(\gamma)$ in~$U\smallsetminus B$.
The observation that numerical homotopy continuation may compute monodromy is the 
{\it point~de~d\'epart} of this paper.

%%%%%%%%%%%%%%%%%%%%%%%%%%%%%%%%%%%%%%%%%%%%%%%%%%%%%%%%%%%%%%%%%%%%%%%%%%%%%%%%%
\subsection{Numerical algebraic geometry}
Numerical algebraic geometry uses solving, path-tracking, and monodromy to study algebraic varieties on
computer. 
For this, it relies on the fundamental data structure of a witness set, which is a geometric representation
based on linear sections~\cite{INAG,NAG}.

Let $F\colon \bC^n\to \bC^m$ be a polynomial map and suppose that $X$ is a component of $F^{-1}(0)$ of
dimension $r$ and degree $d$.
Let $\cL\colon\bC^n\to\bC^{r}$ be a general affine-linear map so that $\cL^{-1}(0)$ is a general affine
subspace of codimension $r$.
By Bertini's Theorem, $\defcolor{W}:=X\cap\cL^{-1}(0)$ consists of $d$ distinct points, and we call the triple
$(F,\cL,W)$ (or simply $W$) a \demph{witness set} for $X$.
If $\cL$ varies in a family $\{\cL_t\mid t\in\bC\}$, then $V\cap\cL_t^{-1}(0)$ gives a homotopy which may
be used to follow the points of $W$ and sample points of $X$.
This is used in many algorithms to manipulate $X$ based on geometric constructions.
A \demph{witness superset} for $X$ is a finite subset $W'\subset F^{-1}(0)\cap\cL^{-1}(0)$ that contains
$W = X\cap\cL^{-1}(0)$.
In many applications, it suffices to work with a witness superset.
For example, if $X$ is a hypersurface, then $\cL^{-1}(0)$ is a general line, $\ell$, and by Zariski's
Theorem (Prop.~\ref{P:Zariski}), the fundamental group of $\bC^n\smallsetminus V$ is generated by  
loops in $\ell$ encircling the points of $W$, and hence also by loops encircling points~of~$W'$.

One algorithm is computing a witness set for the image of an irreducible variety under a linear map~\cite{HS10}.
Suppose that  $F\colon \bC^n\to \bC^m$ is a polynomial map with $V\subset F^{-1}(0)$ a component of 
dimension $r$ as before, and that we have a linear map $\pi\colon\bC^n\to\bC^p$.
Let $U=\overline{\pi(V)}$ be the closure of the image of $V$ under $\pi$, which we suppose has dimension $q$
and degree $\delta$.
To compute a witness set for $U$ from one for $V$, we need an affine-linear map
$\cL\colon\bC^n\to\bC^{r}$ adapted to the map $\pi$. 

Suppose that $\pi$ is given by $\pi(x)=Ax$ for a matrix $A\in\bC^{p\times n}$.
Let $B$ be a matrix $\left[\begin{smallmatrix} B_1\\B_2\end{smallmatrix}\right]$ where the rows of 
$B_1\in\bC^{q\times n}$ are general vectors in the row space of $A$ and the rows of 
$B_2\in\bC^{(r-q)\times n}$ are general vectors in $\bC^n$.
Then $B_1^{-1}(0)$ is the pullback of a general linear subspace of codimension $q$ in $\bC^p$.
Choose a general vector $v\in\bC^r$, define $\cL(x):=Bx-v$, and set $W:=V\cap\cL^{-1}(0)$.
The quadruple \defcolor{$(F,\pi,\cL,W)$} is a witness set for the image of $V$ under $\pi$.
By the choice of $B$, the number of points in $\pi(W)$ is the degree $\delta$ of $U$, and for~$u\in\pi(W)$, the
number of points in $\pi^{-1}(u)\cap W$ is the degree of the fiber of $V$ over $w$, which has
dimension~$r{-}q$. 
The witness set $(F,\pi,\cL,W)$ for the image $U$ may be computed from any witness set $(F,\cL',W')$  for $V$
by following the points of $W'$ along a path connecting the general affine map $\cL'$ to the special affine map
$\cL$. 

Numerical continuation may be used to sort points in a general affine section of a
reducible variety $V$ into witness sets of its components.
Let $F\colon\bC^n\to\bC^m$ be a polynomial map and suppose that $V=V_1\cup\dotsb\cup V_s$ is a union of
components of $F^{-1}(0)$, all having dimension $r$, and that $\cL\colon\bC^n\to\bC^r$ is a general
affine linear map with $\cL^{-1}(0)$ meeting $V$ transversely in~$d$ points $\defcolor{W}:=V\cap\cL^{-1}(0)$.
The witness sets $\defcolor{W_i}:=V_i\cap\cL^{-1}(0)$ for the components form the 
\demph{witness set partition} of $W$ that we seek.

Following points of $W$ along a homotopy as $\cL$ varies, those from $W_i$ remain on $V_i$.
Consequently, if we compute monodromy by allowing $\cL$ to vary in a loop, the partition of $W$ into orbits
is finer than the witness set partition.
Computing additional monodromy permutations may coarsen this orbit partition, but it will always refine
the witness set partition.

Suppose that $\cL_t$ depends affine-linearly on $t$.
The path of $w\in W$ under the corresponding homotopy will in general be a non-linear
function of $t$. 
However if we follow all points in the witness set $W_i$ for a component, then their sum in $\bC^n$ (the trace) is
an affine-linear function of $t$.
For general $\cL_t$, the only subsets of $W$ whose traces are linear in $t$ are unions of the~$W_i$.
Thus we may test if a union of blocks in an orbit partition is a union of the $W_i$.
These two methods, monodromy break up and the trace test, are combined in the algorithm of numerical
irreducible decomposition~\cite[Ch.~15]{SW05} to compute the witness set partition.

%%%%%%%%%%%%%%%%%%%%%%%%%%%%%%%%%%%%%%%%%%%%%%%%%%%%%%%%%%%%%%%%%%%%%%%%%%%%%%%%%
\begin{remark}\label{R:Affine_Charts}
 Oftentimes problems are naturally formulated in terms of homogeneous or multi-homogeneous equations whose
 solutions are subsets of (products of) projective spaces $\bP^n$.
 That is, we have a polynomial map $F\colon \bC^{n+1}\to\bC^r$ and we want to study the projective variety
 given by $F^{-1}(0)$. 
 Restricting $F$ to any affine hyperplane not containing the origin of~$\bC^{n+1}$, we obtain the intersection
 of $F^{-1}(0)$ with an affine chart of $\bP^n$.
 If the hyperplane is general, then the points of interest, including homotopy paths and monodromy loops, will
 lie in that affine chart, and no information is lost by this choice.

 When discussing computation, we will refer to the affine chart given by the vanishing of an affine form, as
 well as referring to the chart via a parameterization of the corresponding affine hyperplane.
 When performing computations, our software works in random affine charts.
\end{remark}
%%%%%%%%%%%%%%%%%%%%%%%%%%%%%%%%%%%%%%%%%%%%%%%%%%%%%%%%%%%%%%%%%%%%%%%%%%%%%%%%%

%%%%%%%%%%%%%%%%%%%%%%%%%%%%%%%%%%%%%%%%%%%%%%%%%%%%%%%%%%%%%%%%%%%%%%%%%%%%%%%%%
\section{Branch point method}\label{S:Branch}

Given a branched cover $f\colon V\to U$ of degree $k$ with branch locus $B$ such that $U$ is rational, we have
the following direct approach to computing its Galois group $\cG:=\cG(V\to U)$.
Choose a regular value $u\in U\smallsetminus B$ of $f$, so that $f^{-1}(u)$ consists of $k$ reduced points.
Numerically following the points of $f^{-1}(u')$ as $u'$ varies along a sequence of loops in $U\smallsetminus B$
based at $u$ computes a sequence $\sigma_1,\sigma_2,\dotsc$ of monodromy permutations in $\cG\subseteq S_k$.
If $G_i$ is the subgroup of $\cG$ generated by $\sigma_1,\dotsc,\sigma_i$, then we have
\[
    G_1\ \subseteq\ G_2\ \subseteq\ \dotsb\ \subseteq \cG\ \subseteq\ S_k\,.
\]
This method was used in~\cite{LS09} (and elsewhere) to show that $\cG=S_k$ by computing 
enough monodromy permutations so that $G_i=S_k$.
When the Galois group is \demph{deficient} in that $\cG\subsetneq S_k$, then this method cannot compute $\cG$,
for it cannot determine if it has computed generators of $\cG$.
The results described in Section~\ref{S:background} lead to an algorithm to compute a set of generators for
$\cG$ and therefore determine $\cG$.

As $U$ is rational, we may replace it by $\bP^n$ where $n=\dim U$ and assume that $f\colon V\to\bP^n$ is a
branched cover of degree $k$.
The branch locus $B\subset\bP^n$ is the set of points $b\in\bP^n$ where $f^{-1}(b)$ does not consist of $k$
distinct (reduced) points.
As $V$ is irreducible, if $k>1$, then $B$ is a hypersurface.
Suppose that $B$ has degree $d$.

Let $\ell\subset\bP^n$ be a projective line that meets $B$ transversally in $d$ points, so that $W=B\cap\ell$ is
a witness set for $B$.
By Bertini's Theorem, a general line in $\bP^n$ has this property.
Let $u\in \ell\smallsetminus B$ and, for each point $b$ of $B\cap\ell$, choose a loop 
\defcolor{$\gamma_b$} based at $u$ encircling $b$ as in Subsection~\ref{SS:FundamentalGroups}.
Let $\defcolor{\sigma_b}\in S_k$ be the monodromy permutation obtained by lifting $\gamma_b$ to $V$.

%%%%%%%%%%%%%%%%%%%%%%%%%%%%%%%%%%%%%%%%%%%%%%%%%%%%%%%%%%%%%%%%%%%%%%%%%%%%%%%%%
\begin{theorem}\label{T:GalGen}
  The Galois group $\cG(V{\to}U)$ is generated by any $d{-}1$ of the monodromy permutations 
  $\{\sigma_b\mid b\in B\cap\ell\}$.
\end{theorem}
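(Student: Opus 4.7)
The plan is to combine Proposition \ref{P:liftLoops}, Zariski's Proposition \ref{P:Zariski}, and the explicit structure of the fundamental group of a punctured line recalled in Subsection \ref{SS:FundamentalGroups}.

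First, I would observe that by Proposition \ref{P:Zariski}, the loops $\{\gamma_b \mid b \in B \cap \ell\}$ generate $\pi_1(\bP^n \smallsetminus B)$, since $\ell$ meets $B$ transversely in the $d$ reduced points of $W = B \cap \ell$. Combined with Proposition \ref{P:liftLoops}, the monodromy permutations $\{\sigma_b \mid b \in B \cap \ell\}$ generate $\cG(V{\to}U)$.

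The content of the theorem is thus the improvement from $d$ generators to $d-1$. For this I would use the fact stated in Subsection \ref{SS:FundamentalGroups} that $\pi_1(\ell \smallsetminus B)$ is a free group freely generated by loops encircling any $d-1$ of the $d$ points of $B \cap \ell$. Equivalently, the loops $\{\gamma_b \mid b \in B \cap \ell\}$ satisfy a relation in $\pi_1(\ell \smallsetminus B)$ (up to appropriate choices of connecting paths, their product in some order is homotopically trivial in $\ell \simeq \bP^1$), so any one of them is expressible as a word in the other $d-1$. Thus any $d-1$ of the loops $\gamma_b$ already generate $\pi_1(\ell \smallsetminus B)$, and by Zariski's surjection they still generate $\pi_1(\bP^n \smallsetminus B)$.

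Applying Proposition \ref{P:liftLoops} to this smaller generating set yields that the corresponding $d-1$ monodromy permutations generate $\cG(V{\to}U)$, which is the theorem. There is no serious obstacle here; the only subtlety I would want to be careful about is that the statement from Subsection \ref{SS:FundamentalGroups} about free generation requires a consistent choice of paths from the base point $u$ to the boundaries $\partial D_i$, so when I say ``any $d-1$ of the $\gamma_b$ generate,'' I am implicitly allowing such a choice, which is harmless because different choices yield conjugate loops and the conjugating elements are themselves in the group generated by the remaining $\gamma_b$'s.
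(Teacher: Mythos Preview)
Your proposal is correct and follows essentially the same approach as the paper's proof: both combine the surjection $\pi_1(\bP^n\smallsetminus B)\to\cG$ from Proposition~\ref{P:liftLoops} with Zariski's surjection $\pi_1(\ell\smallsetminus B)\twoheadrightarrow\pi_1(\bP^n\smallsetminus B)$ and the fact from Subsection~\ref{SS:FundamentalGroups} that $\pi_1(\ell\smallsetminus B)$ is freely generated by any $d{-}1$ of the encircling loops. The paper's proof is simply more terse, folding the $d{-}1$ statement into its citation of Proposition~\ref{P:Zariski}, whereas you separate out that step explicitly.
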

%%%%%%%%%%%%%%%%%%%%%%%%%%%%%%%%%%%%%%%%%%%%%%%%%%%%%%%%%%%%%%%%%%%%%%%%%%%%%%%%%

%%%%%%%%%%%%%%%%%%%%%%%%%%%%%%%%%%%%%%%%%%%%%%%%%%%%%%%%%%%%%%%%%%%%%%%%%%%%%%%%%
\begin{proof}
 By Proposition~\ref{P:liftLoops}, lifting based loops in $\bP^n\smallsetminus B$ to permutations in $S_k$
 gives a surjective homomorphism $\pi_1(\bP^n\smallsetminus B)\to\cG$.
 By Zariski's Theorem (Proposition~\ref{P:Zariski}) the fundamental group of $\bP^n\smallsetminus B$ is generated
 by loops encircling any $d{-}1$ points of $\ell\cap B$, where $d=\deg B$.
 Therefore their lifts to monodromy permutations generate the Galois group $\cG$.
\end{proof}
%%%%%%%%%%%%%%%%%%%%%%%%%%%%%%%%%%%%%%%%%%%%%%%%%%%%%%%%%%%%%%%%%%%%%%%%%%%%%%%%%

%%%%%%%%%%%%%%%%%%%%%%%%%%%%%%%%%%%%%%%%%%%%%%%%%%%%%%%%%%%%%%%%%%%%%%%%%%%%%%%%%
 It is not necessary to replace $U$ by $\bP^n$.
 If we instead use $\bC^n$ with $B\subset\bC^n$, then $\ell\subset\bC^n$ is a complex line,
 $\ell\simeq\bC$. 
 If $B\cap\ell$ is $d$ distinct points where $d$ is the degree of the closure $\overline{B}$ in~$\bP^n$, then
 the statement of Theorem~\ref{T:GalGen} still holds, as $B\cap\ell=\overline{B}\cap\ell$.

 Lifts of loops encircling the points of a witness superset for $\ell\cap B$ will also generate $\cG$. 

%%%%%%%%%%%%%%%%%%%%%%%%%%%%%%%%%%%%%%%%%%%%%%%%%%%%%%%%%%%%%%%%%%%%%%%%%%%%%%%%%
\begin{corollary}\label{C:modifications}
 Suppose that $B'$ is a reducible hypersurface in $\bP^n$ that contains the hypersurface~$B$  
 and that $\ell$ meets $B'$ in a witness superset $W=B'\cap\ell$ for $B$.
 Then lifts $\{\sigma_w\mid w\in W\}$ of loops $\{\gamma_w\mid w\in W\}$ encircling points of $W$ 
 generate $\cG$.
\end{corollary}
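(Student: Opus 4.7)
The plan is to partition $W$ into the subset $W_B := W\cap B$ of ``true'' branch points that lie on $\ell$, and the complementary subset $W':=W\smallsetminus B$ of ``spurious'' witness points coming from the extra components of $B'$. Since $W$ is a witness superset for $B$ with respect to the linear slice $\ell$, we have $B\cap\ell\subseteq W$; and since $W\subseteq\ell$, the reverse inclusion $W\cap B\subseteq B\cap\ell$ is automatic. So $W_B = B\cap\ell$ is exactly the set of points of transverse intersection of $\ell$ with the genuine branch locus.

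First I will show that for each $w\in W'$ the monodromy permutation $\sigma_w$ is trivial. The loop $\gamma_w$ is a based loop in $\ell$ that encircles $w$ and no other point of $W$, and in particular encircles no point of $B\cap\ell \subseteq W_B$. Because $w\notin B$, the loop $\gamma_w$ may be homotoped across $w$ inside $\ell\smallsetminus B$, so it represents the identity in $\pi_1(\ell\smallsetminus B)$, and hence also in $\pi_1(\bP^n\smallsetminus B)$ under the map induced by inclusion. By Proposition~\ref{P:liftLoops} its lift $\sigma_w$ is the identity permutation.

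Next, for $w\in W_B = B\cap\ell$, the loops $\gamma_w$ are precisely the generators of $\pi_1(\bP^n\smallsetminus B)$ furnished by Zariski's theorem (Proposition~\ref{P:Zariski}): lifts of encircling loops at each point of $B\cap\ell$ generate $\pi_1(\bP^n\smallsetminus B)$. This is exactly the input needed for the proof of Theorem~\ref{T:GalGen}, and so $\{\sigma_w \mid w\in W_B\}$ already generates $\cG$. Adjoining the additional identity elements $\{\sigma_w \mid w\in W'\}$ cannot change the generated subgroup, so $\{\sigma_w\mid w\in W\}$ generates $\cG$.

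I do not expect a serious obstacle here: the argument is essentially a bookkeeping enhancement of Theorem~\ref{T:GalGen}. The only subtlety worth double-checking is the contractibility step for spurious points, where one must be careful that $\gamma_w$ lives in $\ell\smallsetminus W$ (not merely $\ell\smallsetminus W_B$) when it is first defined, but the homotopy trivializing it takes place in the larger open set $\ell\smallsetminus B = \ell\smallsetminus W_B$ after the spurious puncture at $w$ has been ``filled in.''
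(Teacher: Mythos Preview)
Your proof is correct and is precisely the argument the paper leaves implicit: the corollary is stated without proof, as an immediate consequence of Theorem~\ref{T:GalGen}, and your partition of $W$ into genuine branch points $W_B=B\cap\ell$ (whose encircling loops generate $\cG$ by Theorem~\ref{T:GalGen}) and spurious points $W'$ (whose encircling loops are null-homotopic in $\ell\smallsetminus B$ and hence lift to the identity) is exactly the intended justification. Your closing remark about where the loop lives versus where the contracting homotopy lives is the one point that deserves care, and you handle it correctly.
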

%%%%%%%%%%%%%%%%%%%%%%%%%%%%%%%%%%%%%%%%%%%%%%%%%%%%%%%%%%%%%%%%%%%%%%%%%%%%%%%%%

%%%%%%%%%%%%%%%%%%%%%%%%%%%%%%%%%%%%%%%%%%%%%%%%%%%%%%%%%%%%%%%%%%%%%%%%%%%%%%%%%
\subsection{Branch point algorithm}

Theorem~\ref{T:GalGen} and Corollary~\ref{C:modifications} give a procedure for determining the Galois group $\cG$
of a branched cover $f\colon V\to U$ when $U$ is rational.
Suppose that $V\subset \bP^m\times\bP^n$ is irreducible of dimension $n$ and that the map $f\colon V\to\bP^n$
given by the projection to $\bP^n$ is dominant, so that $f\colon V\to \bP^n$ is a branched cover.

%%%%%%%%%%%%%%%%%%%%%%%%%%%%%%%%%%%%%%%%%%%%%%%%%%%%%%%%%%%%%%%%%%%%%%%%%%%%%%%%%
\begin{algorithm}[Branch Point Algorithm]\label{BranchPoint} \ 
\begin{enumerate}
 \item Compute a witness set $\defcolor{W}=B\cap\ell$ (or a witness superset) for the branch locus $B$ of
          $f\colon V\to\bP^n$.

 \item Fix a base point $p\in\ell\smallsetminus B$ and compute the fiber $f^{-1}(p)$. 

 \item Compute monodromy permutations $\{\sigma_w\mid w\in W\}$ that are lifts of based loops in 
        $\ell\smallsetminus B$ encircling the points $w$ of $W$.

\end{enumerate}
  The monodromy permutations $\{\sigma_w\mid w\in W\}$ generate the Galois group $\cG$ of $f\colon V\to\bP^n$.
\end{algorithm}
%%%%%%%%%%%%%%%%%%%%%%%%%%%%%%%%%%%%%%%%%%%%%%%%%%%%%%%%%%%%%%%%%%%%%%%%%%%%%%%%%

The correctness of the branch point algorithm follows from Theorem~\ref{T:GalGen} and
Corollary~\ref{C:modifications}. 
We discuss the steps (1) and (3) in more detail.

%%%%%%%%%%%%%%%%%%%%%%%%%%%%%%%%%%%%%%%%%%%%%%%%%%%%%%%%%%%%%%%%%%%%%%%%%%%%%%%%%
\subsubsection{Witness superset for the branch locus}\label{SS:branchWitness}

Suppose that $V\subset\bP^m\times\bP^n$ is an irreducible variety of dimension $n$ such that the projection 
$f\colon V\to\bP^n$ is a branched cover with branch locus $B$.
Since $f$ is a proper map (its fibers are projective varieties), $B$ is the set of critical values of $f$.
These are images of the critical points \defcolor{$\CP$}, which are points of 
$V$ where either~$V$ is singular or
it is smooth and the differential of $f$ does not have full rank.

We use $x$ for the coordinates of the cone $\bC^{m+1}$ of $\bP^m$ and $u$ for the cone $\bC^{n+1}$ over $\bP^n$.
Then $V=F^{-1}(0)$, where $F\colon\bC^{m+1}_x\times\bC^{n+1}_u\to\bC^r$ is a system of $r\geq m$ polynomials that
are separately homogeneous in each set of variables $x$ and $u$.
Let $\defcolor{J_xF}:=(\partial F_i/\partial x_j)_{i=1\dotsc,r}^{j=0,\dotsc,m}$ be the $r\times(m{+}1)$-matrix of
the \demph{vertical partial derivatives} of $F$.

%%%%%%%%%%%%%%%%%%%%%%%%%%%%%%%%%%%%%%%%%%%%%%%%%%%%%%%%%%%%%%%%%%%%%%%%%%%%%%%%%
\begin{proposition}\label{P:CP_Obvious}
 The critical points $\CP$ of the map $f\colon V\to\bP^n$ are the points of\/ $V$ where $J_xF$ has rank less than
 $m$. 
\end{proposition}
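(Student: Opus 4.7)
The plan is to reduce the rank condition on $df_p$ to a rank condition on $J_xF$ by computing the relevant tangent spaces in the bihomogeneous cone, and then to observe that points where $V$ is singular automatically satisfy the same rank condition, so the two cases in the definition of $\CP$ merge into a single criterion.

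First I would pass to the affine cone $\hat V = F^{-1}(0) \subset \bC^{m+1}_x \times \bC^{n+1}_u$. Since $V$ has dimension $n$, the cone $\hat V$ has dimension $n+2$ and codimension $m$ in the ambient affine space. At a smooth point $p = [x_0:u_0]$ of $V$, the Jacobian $JF(x_0,u_0) = [\,J_xF \mid J_uF\,]$ therefore has rank exactly $m$, and $T_{(x_0,u_0)} \hat V = \ker JF$. The Euler relations applied to the bihomogeneous entries of $F$ yield $J_xF \cdot x_0 = 0$ and $J_uF \cdot u_0 = 0$, so the scaling vectors $(x_0,0)$ and $(0,u_0)$ lie in $\ker JF$, and the projective tangent space is
\[
   T_p V \;=\; \ker JF \,\big/\, \langle (x_0,0),\, (0,u_0)\rangle.
\]
The differential $df_p\colon T_pV \to T_{[u_0]}\bP^n$ sends a class $[(v_x,v_u)]$ to $v_u$ modulo $\langle u_0 \rangle$. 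A class lies in $\ker df_p$ precisely when it admits a representative of the form $(v_x,0)$ with $v_x \in \ker J_xF$; quotienting by $\langle x_0 \rangle$ (which is itself contained in $\ker J_xF$ by Euler) yields $\ker df_p \cong \ker J_xF / \langle x_0\rangle$. Hence $df_p$ has full rank if and only if $\ker J_xF = \langle x_0\rangle$, which (since $J_xF$ is $r \times (m+1)$) is equivalent to the rank of $J_xF$ being $m$.

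For the singular case, if $V$ is singular at $p$ then by definition of singularity the rank of $JF(x_0,u_0)$ is strictly less than the codimension $m$; since $J_xF$ is the submatrix of $JF$ consisting of its first $m{+}1$ columns, its rank is bounded above by that of $JF$, and so is also less than $m$. Thus at every singular point of $V$ the condition rank $J_xF < m$ holds automatically. Combining the two cases, $\CP$ coincides exactly with the locus where $J_xF$ has rank less than $m$, proving the proposition. The only potential subtlety is keeping track of scaling directions when passing between the affine cone and the projective tangent spaces, but the Euler relations dispose of this cleanly, and I do not foresee any genuine obstacle.
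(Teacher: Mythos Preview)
The paper does not actually prove this proposition; it is stated without proof (the label \texttt{P:CP\_Obvious} signals that the authors regard it as standard). Your argument is a correct and complete elaboration of the expected reasoning: pass to the biaffine cone, use the Euler relations to identify the two scaling directions in $\ker JF$, compute $\ker df_p \cong \ker J_xF/\langle x_0\rangle$, and conclude that at smooth points $df_p$ drops rank exactly when $\operatorname{rank} J_xF < m$, while singular points are absorbed because $\operatorname{rank} J_xF \le \operatorname{rank} JF < m$ there.

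One small wording issue: in the singular case you write ``by definition of singularity the rank of $JF(x_0,u_0)$ is strictly less than the codimension $m$.'' This is not the definition but rather a consequence of the Jacobian criterion together with the fact that the $F_i$ lie in the ideal of $\hat V$, so that $\ker JF$ contains the Zariski tangent space $T^{\mathrm{Zar}}_{(x_0,u_0)}\hat V$, whose dimension exceeds $n+2$ at a singular point. The conclusion is correct, but it would be cleaner to say ``by the Jacobian criterion'' rather than ``by definition.'' Otherwise the proof is sound.
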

%%%%%%%%%%%%%%%%%%%%%%%%%%%%%%%%%%%%%%%%%%%%%%%%%%%%%%%%%%%%%%%%%%%%%%%%%%%%%%%%%

 To compute a witness set for the branch locus $B=f(\CP)$ we will restrict $f\colon V\to\bP^n$ to a line 
 $g\colon\ell\hookrightarrow\bP^n$, obtaining a curve $\defcolor{C}:=g^{-1}(V)\subset \bP^m\times\ell$ 
 equipped with the projection $f\colon C\to\ell$.
 We then compute the critical points on $C$ of this map and their projection to $\ell$.

%%%%%%%%%%%%%%%%%%%%%%%%%%%%%%%%%%%%%%%%%%%%%%%%%%%%%%%%%%%%%%%%%%%%%%%%%%%%%%%%%
\begin{example}\label{detExample}
 Consider the irreducible two-dimensional variety \defcolor{$V$} in $\bP_{xy}^1\times\bP_{uvw}^2$ defined by the
 vanishing of $F:=ux^3+vy^3-wxy^2$.  
%%%%%%%%%%%%%%%%%%%%%%%%%%%%%%%%%%%%%%%%%%%%%%%%%%%%%%%%%%%%%%%%%%%%%%%%%%%%%%%%%
\begin{figure}[htb]
 \begin{picture}(292,170)(2,0)
   \put(2,0){\includegraphics[height=170pt]{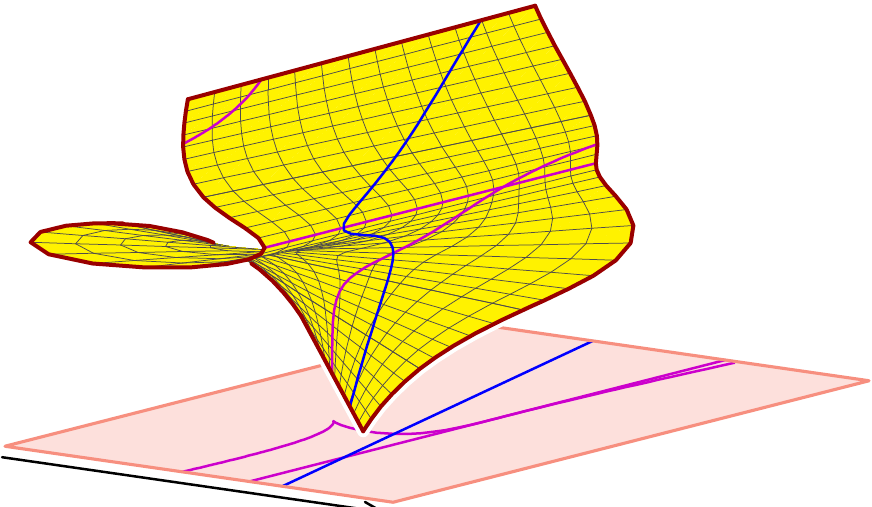}}
   \thicklines
    \put(136,162){\color{white}\vector(2,-1){19}}
    \put(53,128){\color{white}\vector(1,0){20}}
    \put(70,27){\color{white}\vector(2,-1){17}}
    \put(69.5,22){\color{white}\vector(2,-1){22.5}}
  \thinlines
   \put(124,160.5){$C$} \put(134,163){\vector(2,-1){20}}
   \put(190,158){$V$}
   \put(35,124){$\CP$} \put(53,128){\vector(1,0){19}}
   \put(220,114){$\CP$}\put(220,115.5){\vector(-1,0){15}}\put(220,121.5){\vector(-1,0){15}}
   \put(250,150){\vector(0,-1){70}}\put(253,110){$f$}
   \put(273,49){$\bP^2_{uvw}$}
   \put(59,21){$B$}\put(70,27){\vector(2,-1){16}}\put(69.5,22){\vector(2,-1){21.5}}
   \put(169,48){$\ell_1$}
   \put(50, 0){$t$}
 \end{picture}
\caption{The surface $ux^3+vy^3-wxy^2=0$.}
\label{F:cubic_surface}
\end{figure}
%%%%%%%%%%%%%%%%%%%%%%%%%%%%%%%%%%%%%%%%%%%%%%%%%%%%%%%%%%%%%%%%%%%%%%%%%%%%%%%%%
 Write $f$ for the projection of $V$ to $\bP^2$, which is a dominant map.
 This has degree three and in Example~\ref{Ex:Branch_loops} we will see that the Galois group is the full
 symmetric group $S_3$. 
 Its critical point locus is the locus of points of $V$ where 
 the Jacobian $J_{xy}F=[\partial F/\partial x \ \partial F/\partial y]$ has rank less than $m=1$.
 This is defined by the vanishing of the partial derivatives 
 as $3F= x\partial F/\partial x +y\partial F/\partial y$.
 Eliminating $x$ and $y$ from the ideal these partial derivatives generate yields the polynomial $u(27uv^2-4w^3)$,
 which defines the branch locus $B$ and shows that both $B$ and $\CP$ are reducible.
 In fact, $B$ consists of the line~$u=0$ and the cuspidal cubic $27uv^2=4w^3$.
 It is singular at the cusp $[1:0:0]$ of the cubic and the point $[0:1:0]$ where the two components meet.
 The cubic has its flex at this point and the line $u=0$ is its tangent at that flex.
 The branch locus is also the discriminant of $F$, considered as a homogeneous cubic in $x,y$.
 We display $V$, $\CP$, and $B$ in Figure~\ref{F:cubic_surface}.

 Consider the line $\ell_1\subset\bP^2$ which is the image of the map $g\colon\bP^1\hookrightarrow\bP^2$ 
 defined by 
 \[
   [s:t]\ \longmapsto\  [s{-}t:2s{-}3t:5s{+}7t]\,.
 \]
 Let $\defcolor{C}\subset\bP^1_{xy}\times\bP^1_{st}$ be the curve $g^{-1}(V)$
 defined by $G:=(s{-}t)x^3+(2s{-}3t)y^3-(5s{+}7t)xy^2$.  
 Its Jacobian with respect to the $x$ and $y$ variables is simply $g^{-1}(J_{xy}F)$, and so the critical points and
 branch locus are pullbacks of those of $F$ along $g$.
 They are defined by the two partial derivatives $\partial G/\partial x$ and $\partial G/\partial y$.
 These equations of bidegree $(1,2)$ have four common zeroes in $\bP^1\times\bP^1$.
 Projecting to $\ell_1=\bP^1_{st}$ and working in the affine chart where $s=1$ yield
 \begin{equation}\label{fourBranch}
%  -0.64366 - 0.95874\sqrt{-1}\,, -0.64366 + 0.95874\sqrt{-1}\,, -0.18202\,, 1\,.
    -0.64366 + 0.95874\sqrt{-1}\,,\ -0.18202\,,\  -0.64366 - 0.95874\sqrt{-1}\,,\   1\,.
 \end{equation}
 The first three points lie in the cubic component of $B$, while the last is in the line $u=0$ (so that $s=t$).
 We display the curve $C$ in the real affine chart on $\bP^1_{xy}\times\bP^1_{st}$ given
 by $7x+3y=58$ and $s=1$ as well as the branch locus in the affine chart of $\bC\bP^1_{st}(=\ell_1)$ where $s=1$. 
 This chart for $\bP^1_{xy}$ has parameterization $x=4+9z$ and $y=10-21z$ for $z\in\bC$ and is also used 
 in Figure~\ref{F:cubic_surface}, where the $t$-coordinate is as indicated.
\begin{equation}\label{Eq:Curve_and_branch}
  \raisebox{-50pt}{\begin{picture}(150,100)
     \put(0,0){\includegraphics{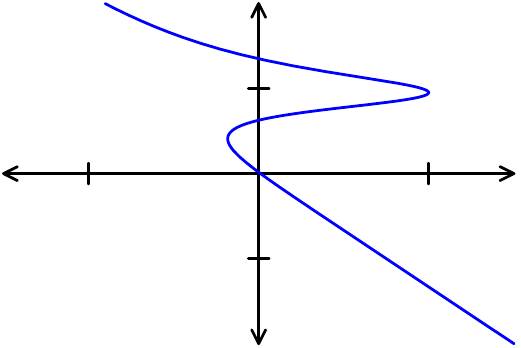}}
     \put(137,53){$t$} \put(78,90){$z$} \put(115,80){$C$}
     \put(63,72){\small$\frac{1}{2}$} \put(121,37){\small$1$}
    \end{picture}
   \qquad\qquad
   \begin{picture}(150,100)
     \put(0,0){\includegraphics{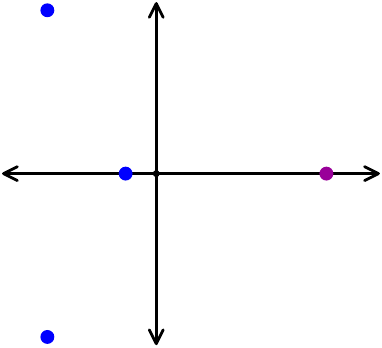}}
     \put(65,53){$\bR$}     \put(90,85){$\bC$}  \put(60,20){$B\cap\ell_1$}
    \end{picture}}
\end{equation}
\end{example}
%%%%%%%%%%%%%%%%%%%%%%%%%%%%%%%%%%%%%%%%%%%%%%%%%%%%%%%%%%%%%%%%%%%%%%%%%%%%%%%%%

%%%%%%%%%%%%%%%%%%%%%%%%%%%%%%%%%%%%%%%%%%%%%%%%%%%%%%%%%%%%%%%%%%%%%%%%%%%%%%%%%
\begin{remark}\label{randomizationRemark}
 In this example (and, in fact, whenever $V\subset\bP^m\times\bP^n$ is a hypersurface so that~$m=1$), the critical
 point locus $\CP$ is defined by both vertical partial derivatives, and is
 therefore a complete intersection.
 In general, $\CP$ is defined by the polynomial system $F$ and the condition on the rank of the
 Jacobian, and is not a complete intersection.
 In numerical algebraic geometry, it is advantageous to work with complete intersections. 

 There are several methods to reformulate this system as a complete intersection.
 If $r>m$, then $F$ may be replaced by a random subsystem of $m$ polynomials.
 We could also require the vanishing of only a random linear combination of the maximal minors of the Jacobian matrix 
 $J_xF$. 
 Another is to add variables, parameterizing a vector in the null space of $J_xF$.
 That is, add the system $J_xF \cdot v=0$ to $F$, where $v$ spans a general line in $\bC^{m+1}$, so that $v$
 lies in an affine chart $\bC^m$ of $\bP^m$, 
 and then project from $\bC^m_v\times\bP^m_x\times\bP^n_y$ to $\bP^n_y$ and obtain a witness set for $B$.
 This also has the advantage that the new equations $J_xF \cdot v=0$ have total degree equal to those of $F$ and 
 are linear in the entries of $v$.

 These reductions to complete intersections will have not only the points of $B\cap\ell$ as solutions, but possible
 additional solutions, and will therefore compute a witness superset for $B\cap\ell$.
%
%  Frank is appalled by the term `junk'. 
%
\end{remark}

%%%%%%%%%%%%%%%%%%%%%%%%%%%%%%%%%%%%%%%%%%%%%%%%%%%%%%%%%%%%%%%%%%%%%%%%%%%%%%%%%
\subsubsection{Computing monodromy permutations}\label{step2}
%
%  I revised this to conform to the definition of encircling.  The math is mostly the same,
%   but a slight revision in Jose's code may be necessary
%
Suppose that we have a witness superset $W\subset \ell$ for $B$, so that $W$ contains the
transverse intersection $B\cap\ell$.
By Corollary~\ref{C:modifications}, monodromy permutations lifting based loops encircling the points of $W$
generate the Galois group $\cG$.
To compute these encircling loops, we choose a general (random) base point $p\in\ell\smallsetminus W$ and work
in an affine chart of $\ell$ that contains $p$ and $W$ and is identified with $\bC$.
After describing our construction of loops, we will state the condition for genericity.

Let $\epsilon>0$ be any positive number smaller than the minimum distance between points of~$W$.
For $w\in W$, the points $w\pm\epsilon$ and $w\pm\epsilon\sqrt{-1}$ are vertices of a square (diamond)
centered at~$w$ that contains no other points of $W$.
Traversing this anti-clockwise, 
\[
    w+\epsilon\ \leadsto\ w+\epsilon\sqrt{-1}\ \leadsto\ w-\epsilon\ 
       \leadsto\ w-\epsilon\sqrt{-1}\ \leadsto\ w+\epsilon\,,
\]
gives a loop encircling the point $w$.
To obtain loops based at $p$, we concatenate each square loop with a
path from $p$ to that loop as follows.
If $w-p$ has negative imaginary part, then this is the straight line path from $p$ to $w+\epsilon\sqrt{-1}$ and
if $w-p$ has positive imaginary part, the path is from $p$ to $w-\epsilon\sqrt{-1}$.
Our assumption of genericity on the point $p$ is that these chosen paths from $p$ to the squares do not 
meet points of $W$, so that we obtain loops in $\ell\smallsetminus W$. 

Observe that concatenating these loops in anti-clockwise order of the paths from $p$ gives a loop whose
negative encircles the point at infinity.
%{\color{red} The description here is different from what Jose wrote, as it had loops going in the wrong direction, and there is a subtlety in the loop encircling infinity.  This might require a small rewriting of the code.}

%%%%%%%%%%%%%%%%%%%%%%%%%%%%%%%%%%%%%%%%%%%%%%%%%%%%%%%%%%%%%%%%%%%%%%%%%%%%%%%%%
\begin{example}\label{Ex:Branch_loops}
 We show this collection of based loops encircling the points $W=B\cap\ell_1$ from the
 witness set on the right in~\eqref{Eq:Curve_and_branch} where $p=0.4+0.3\sqrt{-1}$.
\[
   \begin{picture}(90,100)
     \put(0,0){\includegraphics{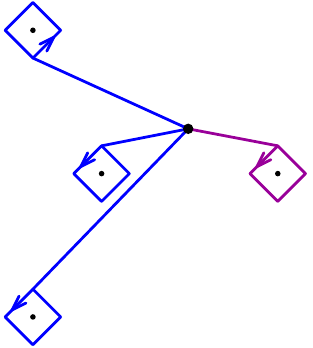}}
     \put(50,70){$p$}
   \end{picture}
\]
 Starting from the rightmost point $1\in W$ and proceeding clockwise, we obtain the permutations 
 $(2,3)$, $(1,3)$, $(1,2)$, and $(1,3)$.
 These generate $S_3$, showing that the Galois group of the cover $V\to\bP^2_{uvw}$ is the
 full symmetric group.
%{-.643663+.958737*ii,  -.643663-.958737*ii, -.182024,           1}
%[       [3, 2, 1],      [3, 2, 1],          [2, 1, 3],       [1, 3, 2]       ]
%         (1,3)           (1,3)                 (1,2)           (2,3)
\end{example}
%%%%%%%%%%%%%%%%%%%%%%%%%%%%%%%%%%%%%%%%%%%%%%%%%%%%%%%%%%%%%%%%%%%%%%%%%%%%%%%%%

%%%%%%%%%%%%%%%%%%%%%%%%%%%%%%%%%%%%%%%%%%%%%%%%%%%%%%%%%%%%%%%%%%%%%%%%%%%%%%%%%
\subsubsection{Implementation subtleties}
In our computations, we do not work directly on projective space, but rather in affine
charts as explained in Remark~\ref{R:Affine_Charts},
and not with general lines, but randomly chosen specific lines.
We illustrate different ways that specific (unfortunate) choices of charts and 
lines may not give a witness set for the branch locus.
While these are overcome in practice by working with affine charts and lines
whose coefficients are randomly generated numbers in~$\bC$, it is important
to point out the subtleties of nongeneric behavior with examples.  

%%%%%%%%%%%%%%%%%%%%%%%%%%%%%%%%%%%%%%%%%%%%%%%%%%%%%%%%%%%%%%%%%%%%%%%%%%%%%%%%%
\begin{example}\label{projectiveBranchLocus}
Recall the family $V\to\bP^2$ of cubics in Example~\ref{detExample}. 
The line $\ell_2$ given by the map $[s:t]\mapsto [t{+}s:t{-}s:0]\subset\bP^2$ which
induces a curve $C_2$ that is not
general because the projection $C_2\to\ell_2$ does not have four distinct branch points.  
There are two critical points, each of multiplicity two, as two pairs of simple critical points
came together over $\ell_2$. 
This is observed in Figure~\ref{F:Cubic_critical} where we see that the line $\ell_2$ contains both singular
points $q$ and $r$ of the branch locus, so $B\cap\ell_2$ consists of two points of multiplicity two.
The line $\ell_2$ does not intersect the branch locus $B$ transversally, so Zariski's Theorem
(Proposition~\ref{P:Zariski}) does not hold.
Also, $B\cap\ell_2$ is not a witness set for $B$.
Lifts of loops encircling the points of $B\cap\ell_2$ generate the cyclic group of order three,
rather than the full symmetric group. 
\end{example}
%%%%%%%%%%%%%%%%%%%%%%%%%%%%%%%%%%%%%%%%%%%%%%%%%%%%%%%%%%%%%%%%%%%%%%%%%%%%%%%%%

%%%%%%%%%%%%%%%%%%%%%%%%%%%%%%%%%%%%%%%%%%%%%%%%%%%%%%%%%%%%%%%%%%%%%%%%%%%%%%%%%
\begin{figure}[htb]
 \begin{picture}(225,100)
  \put(0,0){\includegraphics{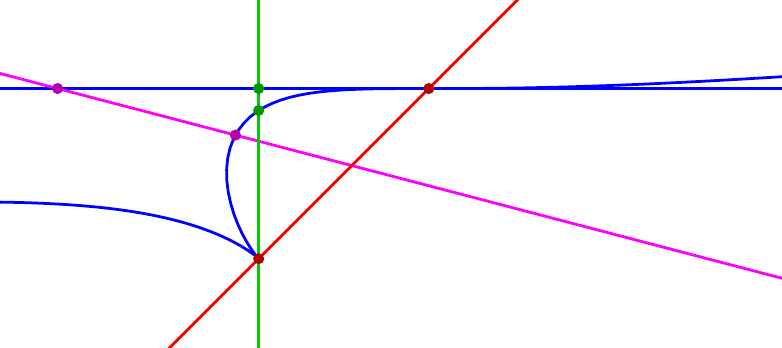}}
  \put(40,6){$\ell_2$}  \put(79,4){$\ell_3$} \put(165,39){$\ell_1$}
  \put(5,45){$B$}  \put(45,78){$B$}
   \put(13,79){$p$}   \put(118,79){$q$}   \put(78,22){$r$}
 \end{picture}
\caption{Branch locus and lines.}
\label{F:Cubic_critical}
\end{figure}
%%%%%%%%%%%%%%%%%%%%%%%%%%%%%%%%%%%%%%%%%%%%%%%%%%%%%%%%%%%%%%%%%%%%%%%%%%%%%%%%%

As $V\subset \bP^1\times\bP^2$, we choose affine charts for both factors.
If the charts are not generic, they may omit points of interest.
We illustrate some possibilities.

%%%%%%%%%%%%%%%%%%%%%%%%%%%%%%%%%%%%%%%%%%%%%%%%%%%%%%%%%%%%%%%%%%%%%%%%%%%%%%%%%
\begin{example}\label{Ex:affDownstairs}
 Consider the affine chart on $\bP^2$ given by $u=1$, excluding points
 on the one-dimensional component $u=0$ of the branch locus $B$.
 On the line $\ell$ this is the affine chart where $t-s=1$, which omits the fourth point of $B\cap\ell_1$
 of~\eqref{fourBranch}, ($p$ in Figure~\ref{F:Cubic_critical}).
 Thus only three of the four branch points are on this affine chart of $\ell_1$. 
 Since $B$ has degree four, lifting loops encircling these three points gives permutations that generate $\cG$, by Theorem~\ref{T:GalGen}.
\end{example}
%%%%%%%%%%%%%%%%%%%%%%%%%%%%%%%%%%%%%%%%%%%%%%%%%%%%%%%%%%%%%%%%%%%%%%%%%%%%%%%%%

%%%%%%%%%%%%%%%%%%%%%%%%%%%%%%%%%%%%%%%%%%%%%%%%%%%%%%%%%%%%%%%%%%%%%%%%%%%%%%%%%
\begin{example}\label{Ex:AffineWitness}
 Suppose now that $\ell_3\subset\bP^3$ has equation $v=w$.
 Then $B\cap\ell_3$ consists of three points, with the point $[1:0:0]$ at the cusp of $B$ of multiplicity
 two.
 We may parameterize~$\ell_3$ by $g\colon[s:t]\mapsto[s{-}t:s:s]$.
 Then the affine chart given by $s=1$ does not contain the singular point of $B\cap\ell_3$.
 Even though the intersection is transverse in this affine chart, the two permutations
 we obtain by lifting loops encircling these points do not generate $\cG$, as 
 Theorem~\ref{T:GalGen} does not hold.  
 The difference with Example~\ref{Ex:affDownstairs} is that the branch point at infinity (not on our chosen
 chart) is singular in this case.
\end{example}
%%%%%%%%%%%%%%%%%%%%%%%%%%%%%%%%%%%%%%%%%%%%%%%%%%%%%%%%%%%%%%%%%%%%%%%%%%%%%%%%%

%%%%%%%%%%%%%%%%%%%%%%%%%%%%%%%%%%%%%%%%%%%%%%%%%%%%%%%%%%%%%%%%%%%%%%%%%%%%%%%%%
\begin{example}\label{affUpstairs}
 A choice of vertical affine chart may also be unfortunate.
 The affine chart on~$\bP^1_{xy}$ where $y=1$ does not meet the line component ($u=y=0$) of the critical point
 locus~$\CP$. 
 Computing a witness set for $\CP$ in this chart and projecting to compute a witness set $B\cap\ell$ for $B$ 
 will only give points in the cubic component of $B$.
 When $\ell$ does not contain the point $q$, this is sufficient to compute $\cG$, for the same reason as in
 Example~\ref{Ex:AffineWitness}.  

 If $V$ is not a hypersurface so that $m>1$, then there may be more interesting components of
 $\CP$ not meeting a given vertical affine chart.
 This may result in the computed points of the witness set $B\cap\ell$ for $B$ being insufficient to generate
 the Galois group $\cG$.
\end{example}
%%%%%%%%%%%%%%%%%%%%%%%%%%%%%%%%%%%%%%%%%%%%%%%%%%%%%%%%%%%%%%%%%%%%%%%%%%%%%%%%%

%%%%%%%%%%%%%%%%%%%%%%%%%%%%%%%%%%%%%%%%%%%%%%%%%%%%%%%%%%%%%%%%%%%%%%%%%%%%%%%%%
\subsubsection{27 lines on a cubic surface}
A cubic surface $S$ is a hypersurface in $\bP^3$ defined by a homogeneous form of degree three.
It is classical that a smooth cubic surface contains exactly 27 lines (see Figure~\ref{F:27}), and these lines
have a particular incidence structure (see Section~\ref{S:27}).
%%%%%%%%%%%%%%%%%%%%%%%%%%%%%%%%%%%%%%%%%%%%%%%%%%%%%%%%%%%%%%%%%%%%%%%%%%%%%%%%%
\begin{figure}[htb]
 \includegraphics[height=150pt]{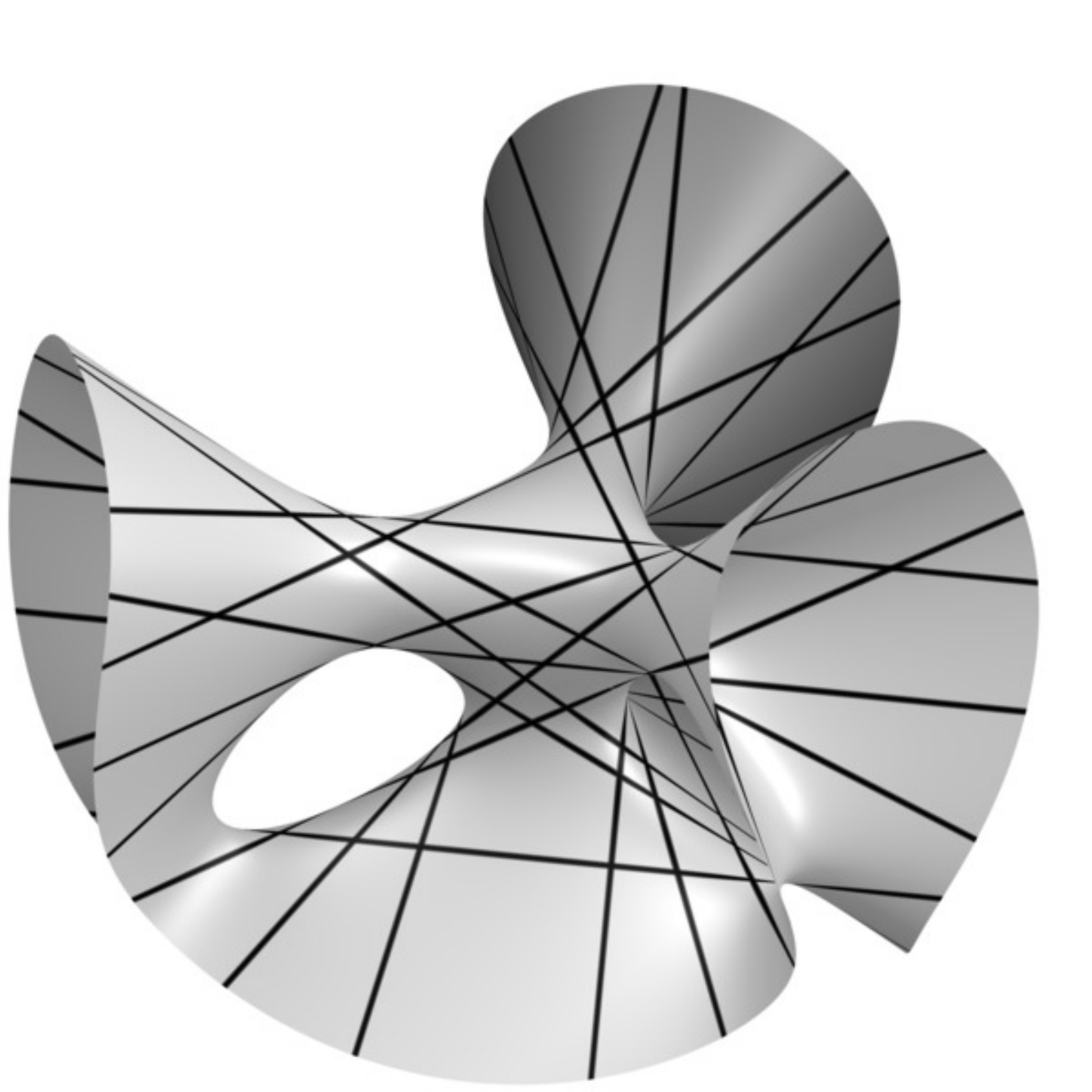}
 \caption{Cubic surface with 27 lines (courtesy of Oliver Labs).} 
 \label{F:27}
\end{figure}
%%%%%%%%%%%%%%%%%%%%%%%%%%%%%%%%%%%%%%%%%%%%%%%%%%%%%%%%%%%%%%%%%%%%%%%%%%%%%%%%%
Jordan~\cite{J1870} studied the Galois action on the 27 lines.
It turns out that the Galois group is the group of symmetries of that incidence structure, which is isomorphic
to the Weyl group~$E_6$, a group of order 51,840.
We formulate this as a monodromy problem $f\colon V\to U$ and use the Branch Point Algorithm to compute and
identify this monodromy group.

There are 20 homogeneous cubic monomials in the variables $X,Y,Z,W$ for $\bP^3$, so we identify the space of
cubics with $U=\bP^{19}$.
For $F\in\bP^{19}$, let $\cV(F)$ be the corresponding cubic surface.
Let $\bG$ be the Grassmannian of lines in $\bP^3$, which is an algebraic manifold of dimension 4.
Form the incidence variety
\[
   V\ :=\ \{(F,\ell)\in\bP^{19}\times\bG\mid \ell\mbox{ lies on }\cV(F)\}\,,
\]
which has a map $f\colon V\to\bP^{19}(=U)$.
Algebraic geometry tells us that the cubics with 27 lines are exactly the smooth cubics, and therefore the
branch locus $B$ is exactly the space of singular cubics.
That is, $B$ is given by the classical multivariate discriminant, whose degree was determined by G.~Boole to be
$32$. 

We summarize the computations associated with determining a witness set for this branch locus. 
Let $G$ be a general cubic (variable coefficients) and consider the vectors $\bv=(1,0,k_1,k_2)$ and
$\bw=(0,1,k_3,k_4)$, which span a general line in $\bP^3$.
This line lies on the cubic surface~$\cV(G)$ when the homogeneous cubic  $G(r\bv{+}s\bw)$
(the cubic restricted to the line spanned by $\bv$ and~$\bw$) is identically zero.  
That is, when the coefficients  $K_0,K_1,K_2,K_3$ of $r^3$, $r^2s$, $rs^2$, and $s^3$ in $G(r\bv{+}s\bw)$ vanish.
This defines the incidence variety $V$ in the space $\bP^{19}\times \bC^4_k$, as the
vectors~$\bv,\bw$ and parameters $k_i$ give an affine open chart of $\bG$.
These polynomials $K_i$ are linear in the coefficients of $G$, which shows that the fiber of $V$ above a point
of $\bG$ is a linear subspace of $\bP^{19}$.
Since $\bG$ is irreducible, as are these fibers, we conclude that $V$ is irreducible.

We choose an affine parameterization $g\colon \bC_t\to\ell\subset\bP^{19}$ of a random line $\ell$ in
$\bP^{19}$.
Then $C:=g^*(V)$ is a curve in $\bC_t\times\bC^4_k$ defined by $g^*(K_i)$ for $i=0,\dotsc,3$.
There are $192$ critical points of the  projection $C\to\mathbb{C}_t$, which 
map six-to-one to $32$ branch points. 
Since the branch locus $B$ has degree 32, these branch points are $B\cap\ell$ and form a witness set for $B$.

Computing loops around the $32$ branch points took less than $45$ seconds using our
implementation in {\tt Bertini.m2}~\cite{bertini4M2} using
{\tt Macaulay2} \cite{M2} and {\tt Bertini} \cite{Bertini}. 
This gave  22 distinct permutations, each a product of six 2-cycles.
These are listed in Figure~\ref{F:E6}, and they generate the Weyl group of $E_6$ of order~51,840 confirming
that it is the Galois group of the problem of 27 lines on a cubic surface.
%%%%%%%%%%%%%%%%%%%%%%%%%%%%%%%%%%%%%%%%%%%%%%%%%%%%%%%%%%%%%%%%%%%%%%%%%%%%%%%%%
\begin{figure}[htb]
{\small
\[
\begin{array}{rr}
 $(1,3)(4,21)(7,27)(8,23)(9,10)(11,12)$\,,&
 $(1,5)(2,11)(7,13)(8,15)(10,18)(20,21)$\,,\\
 $(1,6)(4,13)(8,25)(10,19)(11,16)(20,27)$\,,&
 $(1,7)(3,27)(5,13)(16,22)(19,24)(25,26)$\,, \\%3
 $(1,8)(3,23)(5,15)(6,25)(14,22)(17,24)$\,,    &%2
 $(1,12)(3,11)(13,17)(15,19)(18,25)(20,22)$\,,\\
 $(1,17)(2,27)(8,24)(10,26)(12,13)(16,21)$\,,&
 $(1,18)(4,24)(5,10)(12,25)(14,27)(16,23)$\,,\\
 $(1,19)(2,23)(6,10)(7,24)(12,15)(14,21)$\,,&
 $(1,20)(5,21)(6,27)(9,24)(12,22)(23,26)$\,,\\
 $(1,26)(4,15)(7,25)(10,17)(11,14)(20,23)$\,,&
 $(2,6)(5,16)(8,9)(10,23)(13,22)(17,20)$\,,\\
 $(2,7)(3,17)(4,16)(9,26)(11,13)(23,24)$\,,&
 $(2,8)(3,19)(4,14)(6,9)(11,15)(24,27)$\,, \\%2
 $(2,12)(3,5)(4,20)(9,18)(13,27)(15,23)$\,, &%3
 $(2,14)(4,8)(5,26)(13,25)(17,18)(21,23)$\,,\\%2
 $(2,18)(9,12)(10,11)(14,17)(16,19)(22,24)$\,, &%3
 $(2,20)(4,12)(6,17)(11,21)(19,26)(24,25)$\,,\\
 $(3,16)(4,17)(6,12)(8,18)(10,15)(22,27)$\,,&
 $(3,18)(5,9)(7,14)(8,16)(11,25)(21,24)$\,,\\
 $(3,26)(8,20)(9,17)(12,14)(15,21)(25,27)$\,,&
 $(6,26)(7,8)(13,15)(14,16)(17,19)(23,27)$\,. %2
\end{array}
\] }
\caption{Monodromy permutations.}\label{F:E6}
\end{figure}
%%%%%%%%%%%%%%%%%%%%%%%%%%%%%%%%%%%%%%%%%%%%%%%%%%%%%%%%%%%%%%%%%%%%%%%%%%%%%%%%%

%%%%%%%%%%%%%%%%%%%%%%%%%%%%%%%%%%%%%%%%%%%%%%%%%%%%%%%%%%%%%%%%%%%%%%%%%%%%%%%%%

%%%%%%%%%%%%%%%%%%%%%%%%%%%%%%%%%%%%%%%%%%%%%%%%%%%%%%%%%%%%%%%%%%%%%%%%%%%%%%%%%
\section{Fiber Products}\label{S:fiber}
Let $f\colon V\to U$ be a branched cover of degree $k$ with Galois/monodromy group $\cG$.
As explained in Subsection~\ref{SS:Galois_Monodromy}, the action of $\cG$ on $s$-tuples of points in a fiber of
$f$ is given by the decomposition into irreducible components of iterated fiber products.
We discuss the computation and decomposition of iterated fiber products using 
numerical algebraic~\mbox{geometry}.

Computing fiber products in numerical algebraic geometry was first discussed in~\cite{SW08}.
Suppose that $V\subset\bC_x^m\times\bC_y^n$ is an $n$-dimensional irreducible component of $F^{-1}(0)$ where
$F\colon\bC^m_x\times\bC^n_y\to\bC^m$ and we write $F(x,y)$ with $x\in\bC^m$ and $y\in\bC^n$. 
There are several methods to compute (components of) iterated fiber products.

First, if $\bC^n\simeq\defcolor{\Delta}\subset\bC^n\times\bC^n$ is the diagonal, then 
$V^2_{\bC^n}=V\times_{\bC^n}V\to\bC^n$ is the pullback of the product $V\times V\to\bC^n\times\bC^n$ along
the diagonal $\Delta$.
Were $V$ equal to $F^{-1}(0)$, then $V^2_{\bC^n}$ equals $G^{-1}(0)$, where 
\[
    G\ \colon\ \bC^m\times\bC^m\times\bC^n\ \longrightarrow\ \bC^m\times\bC^m
\]
is given by $G(x^{(1)},x^{(2)},y)= (F(x^{(1)},y), F(x^{(2)},y))$ where $x^{(1)}$ lies in the first copy
of $\bC^m$ and~$x^{(2)}$ lies in the second. 
We also have $V^2_{\bC^n} = (V\times V)\cap(\bC^m\times\bC^m\times\Delta)$.

In general, $V$ is a component of $F^{-1}(0)$ and $V^2_{\bC^n}$ is a union of some components of
$G^{-1}(0)$, and we may compute a witness set representation for $V^2_{\bC^n}$ using its description as
the intersection of the product $(V\times V)$ with $\bC^m\times\bC^m\times\Delta$ as in \S~12.1
of~\cite{bertinibook}.
Iterating this computes~$V^s_{\bC^n}$, which has several irreducible components.
Among these may be components that do not map dominantly to $\bC^n$---these come from fibers of $V\to\bC^n$ of
dimension at least one and thus will lie over a proper subvariety of the branch locus $B$ as $V$ is irreducible
and~$B$ is a hypersurface.
There will also be components lying in the big diagonal where some coordinates in the fiber are equal, with
the remaining components constituting $V^{(s)}$, whose fibers over points of $\bC^n\smallsetminus B$ are
$s$-tuples of distinct points.

In practice, we first restrict $V$ to a general line $\ell\subset\bC^n$, for then $V|_\ell$ will be an
irreducible curve $C$ that maps dominantly to $\ell$.
It suffices to compute the fiber products $C^s_\ell$,  decompose them into irreducible components, and
discard those lying in the big diagonal to obtain $C^{(s)}$ which will be the restriction of $V^{(s)}$ to
$\ell$.
As $C^{(s+1)}$ is the union of components of $C^{(s)}\times_\ell C$ that lie outside the big diagonal, we
may compute $C^{(s)}$ iteratively:
First compute $C^{(2)}$, then for each irreducible component $D$ of $C^{(2)}$, decompose the fiber product
$D\times_\ell C$, removing components in the big diagonal, and continue.
Symmetry may also be used to simplify this computation (e.g., as used in Subsection~\ref{S:27}).

We offer three algorithms based on computing fiber products that obtain information about the Galois/monodromy
group $\cG$ of $f\colon V\to U$.
Let $k$, $\ell$, and $C$ be as above.
Let $p\in \ell\smallsetminus B$ be a point whose fiber in $C$ consists of $k$ distinct  points. 

%%%%%%%%%%%%%%%%%%%%%%%%%%%%%%%%%%%%%%%%%%%%%%%%%%%%%%%%%%%%%%%%%%%%%%%%%%%%%%%%%
\begin{algorithm}[Compute $\cG$]\label{Al:Gal_fibre} \ 
 \begin{enumerate}
   \item Compute an irreducible component $X$ of $C^{(k-1)}$.
    \item Fixing a $k{-}1$-tuple $(x_1,\dotsc,x_{k-1})\in X$ lying over $p$, let 
\[
      \cG\ =\ \{\sigma\in S_k\mid (x_{\sigma(1)},\dotsc,x_{\sigma(k-1)})\mbox{ lies over }p\}\,.
\]
  \item Then $\cG$ is the Galois monodromy group.
 \end{enumerate}
\end{algorithm}
%%%%%%%%%%%%%%%%%%%%%%%%%%%%%%%%%%%%%%%%%%%%%%%%%%%%%%%%%%%%%%%%%%%%%%%%%%%%%%%%%

%%%%%%%%%%%%%%%%%%%%%%%%%%%%%%%%%%%%%%%%%%%%%%%%%%%%%%%%%%%%%%%%%%%%%%%%%%%%%%%%%
\begin{proof}[Proof of correctness]
 Recall that $C^{(k-1)}\simeq C^{(k)}$ since knowing $k-1$ of the points in a fiber of $C$ over a point
 $p\in\ell\smallsetminus B$ determines the remaining point, and the same for $V$.
 Since $X$ lies in a unique component of $V^{(k-1)}$, this follows by Proposition~\ref{P:fibre}.
\end{proof}
%%%%%%%%%%%%%%%%%%%%%%%%%%%%%%%%%%%%%%%%%%%%%%%%%%%%%%%%%%%%%%%%%%%%%%%%%%%%%%%%%
\smallskip
%%%%%%%%%%%%%%%%%%%%%%%%%%%%%%%%%%%%%%%%%%%%%%%%%%%%%%%%%%%%%%%%%%%%%%%%%%%%%%%%%
\begin{algorithm}[Orbit decomposition of $\cG$ on $s$-tuples and $s$-transitivity]\label{Al:orbit} \ 
  \begin{enumerate}
   \item Compute an irreducible decomposition of $C^{(s)}$,
   \[
      C^{(s)}\ =\ X_1\ \cup\ X_2\ \cup\ \dotsb\ \cup\ X_r\,.
\]
  \item  The action of $\cG$ on distinct $s$-tuples has $r$ orbits, one for each irreducible component
    $X_i$. 
    In the fiber $f^{-1}(p)$ of $C^{(s)}$ these orbits are
\[
    \cO_i\ :=\ f^{-1}(p) \cap X_i \qquad i=1,\dotsc,r\,.
\]
  \item If $r=1$, so that $C^{(s)}$ is irreducible, then $G$ acts $s$-transitively.
 \end{enumerate}
\end{algorithm}
%%%%%%%%%%%%%%%%%%%%%%%%%%%%%%%%%%%%%%%%%%%%%%%%%%%%%%%%%%%%%%%%%%%%%%%%%%%%%%%%%

%%%%%%%%%%%%%%%%%%%%%%%%%%%%%%%%%%%%%%%%%%%%%%%%%%%%%%%%%%%%%%%%%%%%%%%%%%%%%%%%%
\begin{proof}[Proof of correctness]
 This follows by Proposition~\ref{P:orbits}.
\end{proof}
%%%%%%%%%%%%%%%%%%%%%%%%%%%%%%%%%%%%%%%%%%%%%%%%%%%%%%%%%%%%%%%%%%%%%%%%%%%%%%%%%
\smallskip
%%%%%%%%%%%%%%%%%%%%%%%%%%%%%%%%%%%%%%%%%%%%%%%%%%%%%%%%%%%%%%%%%%%%%%%%%%%%%%%%%
\begin{algorithm}[Test $\cG$ for primitivity]\label{Al:primitivity}\ 
 \begin{enumerate}
   \item  Compute an irreducible decomposition of $C^{(2)}$.
   \item  If $C^{(2)}$ is irreducible, then $\cG$ is $2$-transitive and primitive.
   \item  Otherwise, use Step~2 of Algorithm~\ref{Al:orbit} to obtain the decomposition of $(f^{-1}(p))^2$ into
     $\cG$-orbits, and construct the graphs $\Gamma_{\cO}$ of Subsection~\ref{SS:permutation_groups}.
   \item Then $\cG$ is primitive if and only if all graphs $\Gamma_{\cO}$ are connected when $\cO$ is not the
     diagonal.
  \end{enumerate}
\end{algorithm}
%%%%%%%%%%%%%%%%%%%%%%%%%%%%%%%%%%%%%%%%%%%%%%%%%%%%%%%%%%%%%%%%%%%%%%%%%%%%%%%%%

%%%%%%%%%%%%%%%%%%%%%%%%%%%%%%%%%%%%%%%%%%%%%%%%%%%%%%%%%%%%%%%%%%%%%%%%%%%%%%%%%
\begin{proof}[Proof of correctness]
 This follows by Higman's Theorem (Proposition~\ref{P:Higman}).
\end{proof}
%%%%%%%%%%%%%%%%%%%%%%%%%%%%%%%%%%%%%%%%%%%%%%%%%%%%%%%%%%%%%%%%%%%%%%%%%%%%%%%%%
\smallskip
%%%%%%%%%%%%%%%%%%%%%%%%%%%%%%%%%%%%%%%%%%%%%%%%%%%%%%%%%%%%%%%%%%%%%%%%%%%%%%%%%
\begin{remark}
 Algorithm~\ref{Al:Gal_fibre} to compute $\cG$ using fiber products will be infeasible in practice:
 Even if we have $C\subset\bP^1\times\ell$, then $C^{(k-1)}\subset (\bP^1)^{k-1}\times\ell$, a curve in a
 $k$-dimensional space.
 Such a formulation would have very high degree, as  $C\subset\bP^1\times\ell$ would be defined by a
 polynomial of degree at least $k$.
 Worse than this possibly high dimension and degree of polynomials is that the degree of $C^{(k-1)}\to\ell$
 will be $k!$ with each irreducible component having degree $|\cG|$.
 For the computation in Subsection~\ref{SS:AH}, $k=26$ and $\cG=2^{13}\cdot 13!\approx 5\times 10^{13}$.

 Nevertheless, the interesting transitive permutation groups will fail to be $s$-transitive for~$s\leq 5$
 (Proposition~\ref{P:Cameron}), and interesting characteristics of that action may be discovered through
 studying $C^{(2)}$ using Algorithm~\ref{Al:primitivity}, as shown in the Introduction.
\end{remark}
%%%%%%%%%%%%%%%%%%%%%%%%%%%%%%%%%%%%%%%%%%%%%%%%%%%%%%%%%%%%%%%%%%%%%%%%%%%%%%%%%

%%%%%%%%%%%%%%%%%%%%%%%%%%%%%%%%%%%%%%%%%%%%%%%%%%%%%%%%%%%%%%%%%%%%%%%%%%%%%%%%%
\subsection{Lines on a cubic surface}\label{S:27}

We briefly review the configuration of the 27 lines on a cubic surface, and what we expect from the
decomposition of $V^{(s)}$ for $s=2,3$.
This is classical and may be found in many sources 
such as \cite[pp.~480--489]{GH}.

Let $p_1,\dotsc,p_6$ be six points in $\bP^2$ not lying on a conic and with no three collinear.
The space of cubics vanishing at $p_1,\dotsc,p_6$ is four-dimensional and gives a rational map
$\bP^2-\to\bP^3$ whose image is a cubic surface $S$ that is isomorphic to $\bP^2$ blown up at the six points
$p_1,\dotsc,p_6$. 
That is, $S$ contains six lines $\widehat{p_1},\dotsc,\widehat{p_6}$ and has a map $\pi\colon S\to\bP^2$ that
sends the line $\widehat{p_i}$ to $p_i$ and is otherwise an isomorphism.
The points of the line  $\widehat{p_i}$ correspond to tangent directions in $\bP^2$ at $p_i$, and the 
\demph{proper transform} of a line or curve in $\bP^2$ is its inverse image under $\pi$, with its tangent
directions at $p_i$ (points in $\widehat{p_i}$) lying above $p_i$, for each $i$.
This surface $S$ contains~27 lines as follows.
\begin{itemize}

 \item Six are the blow ups $\widehat{p_i}$ of the points $p_i$ for $i=1,\dotsc,6$.

 \item Fifteen $(=\binom{6}{2})$ are the proper transforms $\widehat{\ell_{ij}}$ of the lines through two points
   $p_i$ and~$p_j$ for $1\leq i<j\leq 6$.

 \item Six are the proper transforms $\widehat{C_i}$ of the conics through five points 
        \mbox{$\{p_1,\dotsc,p_6\}\smallsetminus\{p_i\}$} for $i=1,\dotsc,6$.

\end{itemize}
Figure~\ref{F:six_points} gives a configuration of six points in $\bP^2$, together with
three of the lines and one of the conics they determine, showing some points of intersection.
%%%%%%%%%%%%%%%%%%%%%%%%%%%%%%%%%%%%%%%%%%%%%%%%%%%%%%%%%%%%%%%%%%%%%%%%%%%%%%%%%
\begin{figure}[htb]
 \begin{picture}(160,120)(0,8)
  \put(0,8){\includegraphics{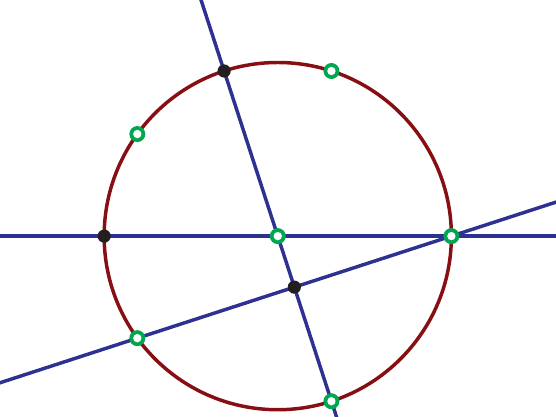}}
  \put(0,64){$\ell_{12}$}  \put(45,118){$\ell_{16}$}
  \put(0,26){$\ell_{25}$}  \put(118,95){$C_1$}
  \put(31,96){$p_4$}  \put(91,114){$p_3$}
  \put(83,66){$p_1$}  \put(118,65){$p_2$}
  \put(32,21){$p_5$}  \put(96,21){$p_6$}
 \end{picture}
\caption{Six points, some lines, and a conic.}
\label{F:six_points}
\end{figure}
%%%%%%%%%%%%%%%%%%%%%%%%%%%%%%%%%%%%%%%%%%%%%%%%%%%%%%%%%%%%%%%%%%%%%%%%%%%%%%%%%

Each line $\lambda$ on $S$ is disjoint from 16 others and meets the remaining ten.
With these ten,~$\lambda$ forms five triangles---the plane $\Pi$ containing any two lines $\lambda,\mu$ on $S$
that meet will contain a third line $\nu$ on $S$ as $\Pi\cap S$ is a plane cubic curve containing $\lambda$ and
$\mu$.

We explain this in detail for the lines $\widehat{p_1}$, $\widehat{\ell_{12}}$, and $\widehat{C_1}$.
\begin{itemize}
 \item
      The line $\widehat{p_1}$ is disjoint from $\widehat{p_i}$ for $2\leq i\leq 6$ as the points are distinct.
      It is disjoint from $\widehat{\ell_{ij}}$ for $2\leq i<j\leq 6$, as no such line $\ell_{ij}$ meets
      $p_1$, and it is disjoint from $\widehat{C_1}$, as $p_1\not\in C_1$.
      The line $\widehat{p_1}$ does meet the lines $\widehat{C_i}$ and $\widehat{\ell_{1i}}$ for
      $2\leq i\leq 6$, as $p_1$ lies on these conics $C_i$ and lines $\ell_{1i}$.

  \item 
      The line $\widehat{\ell_{12}}$ is disjoint from the lines $\widehat{p_i}$,  $\widehat{\ell_{1i}}$,
      $\widehat{\ell_{2i}}$, and $\widehat{C_i}$, for $3\leq i\leq 6$.
      We have seen this for the $\widehat{p_i}$.
      For the lines, $\widehat{\ell_{1i}}$ and $\widehat{\ell_{2i}}$, this is because $\ell_{12}$ meets the lines
      $\ell_{1i}$ and $\ell_{2i}$ at the points $p_1$ and $p_2$, but it has a different slope at each point,
      and the same is true for the conic $C_i$.
      We have seen that $\widehat{\ell_{12}}$ meets both $\widehat{p_1}$ and $\widehat{p_2}$.
      It also meets $\widehat{\ell_{ij}}$ for $2\leq i<j\leq 6$, as well as $\widehat{C_1}$, and
      $\widehat{C_2}$, because $\ell_{12}$ meets the underlying lines and conics at points outside of
      $p_1,\dotsc,p_6$.
      (See Figure~\ref{F:six_points}.)

 \item Finally, the line $\widehat{C_1}$ is disjoint from $\widehat{p_1}$, from $\widehat{\ell_{ij}}$ for
     $2\leq i<j\leq 6$, and from $\widehat{C_i}$ for $2\leq i\leq 6$.
     The last is because $C_1$ meets each of those conics in four of the points $p_2,\dotsc,p_6$ and no other
     points.
     As we have seen, $\widehat{C_1}$ meets $\widehat{p_i}$ and $\widehat{\ell_{1i}}$ for $2\leq i\leq 6$.

\end{itemize} 

We describe the decomposition of $V^{(2)}$ and $V^{(3)}$.
Let \defcolor{$V^{[2]}$} be the closure in $V^2_{\bP^{19}}$ of its restriction to $\bP^{19}\smallsetminus B$.
Let $p\in\bP^{19}\smallsetminus B$.
The fiber $f^{-1}(p)$ in $V^{[2]}$ consists of the $27^2=729$ pairs $(\lambda,\mu)$ of lines $\lambda,\mu$
that lie on the cubic given by $p$.
The variety $V^{[2]}$ has degree $729$ over~$\bP^{19}$ and decomposes into three subvarieties.
We describe typical points $(\lambda,\mu)$ in the fibers of~each.
\begin{enumerate}

 \item The diagonal $\Delta$, whose points are pairs where $\lambda=\mu$.
        It has degree 27, is irreducible and isomorphic to $V$.

 \item The set of disjoint pairs, $D$, whose points are pairs of disjoint lines $(\lambda,\mu)$ where
        $\lambda\cap\mu=\emptyset$. 
        It has degree $27\cdot 16=432$ over $\bP^{19}$.

 \item The set of incident pairs, $I$, whose points are pairs of incident lines  $(\lambda,\mu)$ where
   $\lambda\cap\mu\neq\emptyset$. 
         It has degree $27\cdot 10=270$ over $\bP^{19}$.

\end{enumerate}
In particular, since $V^{(2)}$ decomposes into two components, which we verified using
a numerical irreducible decomposition via {\tt Bertini} \cite{Bertini},
the action of $\cG$ fails to be 2-transitive. 

However, $\cG$ is primitive, which may be seen using Algorithm~\ref{Al:primitivity} and Higman's Theorem
(Proposition~\ref{P:Higman}). 
As $V$ is irreducible, $\cG$ is transitive.
Since $D$ is irreducible, the 216 unordered pairs of disjoint lines form an orbit \defcolor{$\cD$} of $\cG$.
The graph $\Gamma_\cD$ is connected.
Indeed, the only non-neighbors of $\widehat{p_1}$ are $\widehat{C_j}$ and 
$\widehat{\ell_{1j}}$ for $2\leq j\leq 6$.
As $\widehat{C_j}$ is disjoint from $\widehat{C_1}$ and $\widehat{\ell_{1j}}$ is disjoint from $\widehat{p_i}$
for $i\neq 1,j$, and $\widehat{p_1}$ is disjoint from both $\widehat{C_1}$ and $\widehat{p_i}$,
$\Gamma_\cD$ is connected (and has diameter two).
Similarly, as $I$ is irreducible, the pairs of incident lines form a single orbit whose associated graph may
be checked to have diameter two.

The decomposition of $V^{(3)}$ has eight components, 
which we verified using a numerical irreducible decomposition via {\tt Bertini}~\cite{Bertini}.
These components have four different types up to the action of $S_3$ on triples.
\begin{enumerate}
  \item Triangles, $\tau$.
         The typical point of $\tau$ is a triangle, three distinct lines that meet each other.
         This has degree $270$ over $\bP^{19}$ and is a component of $I\times_{\bP^{19}}V$.

   \item Mutually skew triples, $\sigma$.
           The typical point of $\sigma$ is three lines, none of which meet each other.
           This has degree $4320$ over $\bP^{19}$, and is a component of $D\times_{\bP^{19}}V$.

   \item There are three components $\rho_i$ consisting of triples $(\lambda_1,\lambda_2,\lambda_3)$ of lines
     where the $i$th  line does not meet the other two, but those two meet.
     Each has degree $2160$ over $\bP^{19}$ and~$\mu_3$ is a component of $I\times_{\bP^{19}}V$.

   \item There are three components $\xi_i$ consisting of triples $(\lambda_1,\lambda_2,\lambda_3)$ of lines
     where the $i$th line meets the other two, but those two do not meet.
     Each has degree $2160$ over $\bP^{19}$ and~$\mu_3$ is a component of $D\times_{\bP^{19}}V$.

\end{enumerate}
%\newpage

%%%%%%%%%%%%%%%%%%%%%%%%%%%%%%%%%%%%%%%%%%%%%%%%%%%%%%%%%%%%%%%%%%%%%%%%%%%%%%%%%
\section{Galois groups in applications}\label{S:examples}
We present three problems from applications that have interesting Galois groups, which we
compute using our methods.

%%%%%%%%%%%%%%%%%%%%%%%%%%%%%%%%%%%%%%%%%%%%%%%%%%%%%%%%%%%%%%%%%%%%%%%%%%%%%%%%%
\subsection{Formation shape control}\label{SS:AH}
Anderson and Helmke~\cite{AH14} consider a least-squares solution to a problem of placing agents at
positions $x_1,\dotsc,x_N\in\bR^d$ having preferred pairwise distances $u_{ij}=u_{ji}$ for $1\leq i,j\leq N$,
that is, minimizing the potential
\[
    \defcolor{\Psi_u}\ :=\ \sum_{i,j} \left(\left\Vert x_{i}-x_{j}\right\Vert ^{2}-u_{ij}^{2}\right)^{2}\,.
\]
They specialize to points on a line $d=1$ and eliminate translational ambiguity by setting $x_N=0$.
Then they relax the problem to finding the complex critical points of the gradient descent flow given by
$\Psi_u$.
This yields the system of cubic equations
\[
   0\ =\ \sum_{j=1}^N \bigl((x_i-x_j)^2-u_{ij}^2\bigr)(x_i-x_j)
  \quad i=1,\dotsc,N{-}1\,,\qquad x_N=0\,.
\]
Thus when $N\geq 4$ there are at most $3^{N-1}$ isolated complex solutions for general $u_{ij}$, one of which
is degenerate: $x_i=0$ for all $i$ with the agents collocated at the origin.
When the $u_{ij}$ are real, there are always at least $2N{-}1$ real critical formations.
The symmetry $x_i\mapsto -x_i$ reflecting in the origin gives an involution acting freely on the nondegenerate
solutions. 
This commutes with with complex conjugation and implies that there is
an additional congruence modulo four in the number of real solutions (compare to~\cite{HSZII,HSZI}).

We compute the Galois group when $N=4$.
Anderson and Helmke show that the upper bound of 27 critical points is obtained for general
$u_{ij}$, with 26 nondegenerate solutions having no two agents collocated.
They also show that all possible numbers of real critical points (not including the origin),
$6,10,14,18,22,26$ between $6=2N-2$ and $26$ that are congruent to $6$ modulo four do indeed occur.
The symmetry  $x_i\mapsto -x_i$ implies that the Galois group preserves the partition of the solutions into
the pairs $\{x_i,-x_i\}$, which implies that it is a subgroup of the wreath product $S_2\Wr S_{13}$, which has order
$\mbox{51,011,754,393,600} =2^{13}\cdot 13!$. 

The Branch Point Algorithm shows that the Galois group of this system is indeed 
equal to the wreath product $S_2\Wr S_{13}$. 
We found this by computing $144$ critical points that map two-to-one to the $72$ branch points. 
Taking loops around each of the $72$ branch points can be performed in under a minute using one processor on a
laptop. 
Interestingly, while most critical points were simple in that their local monodromy was a 2-cycle, several
were not. 
 
%%%%%%%%%%%%%%%%%%%%%%%%%%%%%%%%%%%%%%%%%%%%%%%%%%%%%%%%%%%%%%%%%%%%%%%%%%%%%%%%%
\subsection{Alt-Burmester 4-bar examples}

In 1886, Burmester \cite{Burmester} considered the synthesis problem for planar four-bar linkages based on
motion generation, specifying  poses along a curve.  
Alt \cite{Alt} proposed synthesis problems based on path generation,
specifying positions along a curve.  The synthesis problem consisting
of some poses and some positions was called an Alt-Burmester problem in \cite{AltBurmesterOrig}
with the complete solution to all Alt-Burmester problems described in \cite{AltBurmester}.  
We compute the Galois group for four of the Alt-Burmester problems having finitely many solutions.

Figure~\ref{F:AltBurmester} illustrates these problems.
A \demph{four-bar linkage} is a quadrilateral with one side fixed and four rotating joints.
A triangle is erected on the side opposite the fixed side, and a tool is mounted on the
apex of the triangle with a particular orientation.
%%%%%%%%%%%%%%%%%%%%%%%%%%%%%%%%%%%%%%%%%%%%%%%%%%%%%%%%%%%%%%%%%%%%%%%%%%%%%%%%%
\begin{figure}[htb]
  \begin{picture}(100,150)(-1,0)
   \put(0,0){\includegraphics[height=150pt]{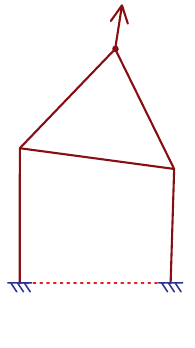}}
   \put(20,15){fixed side}  
   \put(60,138){tool}  
   \put(-1,126){apex \raisebox{3pt}{\vector(1,0){20}}}
  \end{picture}
  \qquad
  \begin{picture}(80,150)
   \put(0,0){\includegraphics[height=150pt]{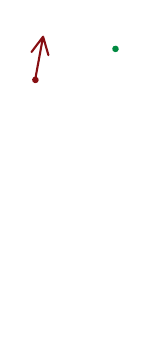}}
   \put(30,135){position}  
   \put(3,105){pose}
  \end{picture}
  \qquad
  \begin{picture}(142.5,150)
   \put(0,0){\includegraphics[height=150pt]{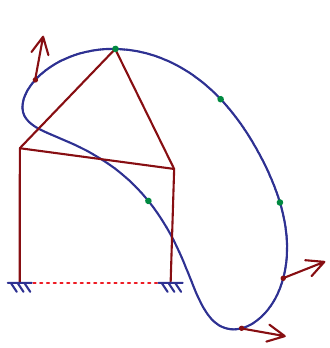}}
  \end{picture}
 \caption{A linkage, poses, positions, and a solution for 3 poses and 4 positions.}
 \label{F:AltBurmester} 
\end{figure}
%%%%%%%%%%%%%%%%%%%%%%%%%%%%%%%%%%%%%%%%%%%%%%%%%%%%%%%%%%%%%%%%%%%%%%%%%%%%%%%%%
A pose is a position together with an orientation for the tool.
Specifying $M$ poses and $N=10-2M$ positions, there will generically
be finitely many linkages that take on the given
poses and whose apex can pass through the given positions in its motion.

Following \cite{AltBurmester} in isotropic coordinates, 
the $M$-pose and $N$-position Alt-Burmester problem is described by the following parameters:
\begin{eqnarray*}
   \mbox{positions:}&& (D_j,\overline{D}_j)\,,\ \mbox{for } j = 1,\dotsc,M{+}N\\
   \mbox{orientations:}&&
    (\Theta_j,\overline{\Theta}_j)\,,\ \mbox{for }\  j = 1,\dotsc,M 
    \ \mbox{with}\ \Theta_j\overline{\Theta}_j = 1\,.
\end{eqnarray*}
With variables $G_1,G_2,z_1,z_2,\overline{G}_1,\overline{G}_2,\overline{z}_1,\overline{z}_2,
\Theta_j,\overline{\Theta}_j$ for $j = M{+}1,\dots,M{+}N$, we consider
 \begin{eqnarray*}
   L_{rj}\overline{L}_{rj} - L_{r1}\overline{L}_{r1} &=& 0\,,\ \mbox{for }
          j = 2,\dots,M+N\ \mbox{and } r = 1,2 \\
    \Theta_j\overline{\Theta}_j - 1 & = & 0\,,\ \mbox{for } j = M+1,\dots,M+N
 \end{eqnarray*}
%
%  Cannot use k as an index, it is the number of solutions to such a system!
%
where
\[
   L_{rj}\ :=\ \Theta_j z_r + D_j - G_r\ \mbox{\ and\ }\ 
   \overline{L}_{rj}\: =\ \overline{\Theta}_j \overline{z}_r + \overline{D}_j - \overline{G}_r\,.
\]

We first consider the classical case studied by Burmester, namely $M = 5$ and $N = 0$.  
As noted by Burmester, the system of $8$ polynomials, using modern terminology,
is a fiber product since the synthesis problem for four-bar linkages uncouples
into two synthesis problems for so-called RR dyads (left- and right- halves of the linkage).  
Each RR dyad synthesis problem
has $4$ solutions with Galois group~$S_4$.  Thus, the polynomial system for the four-bar 
linkage synthesis problem has $16$ solutions which decomposes into
two components: $4$ points on the diagonal $\Delta$ and $12$ disjoint pairs.
The Branch Point Algorithm uses homotopy continuation to track a loop around each of the $64$ branch points. 
These loops yield the permutations listed in Figure~\ref{burm50Permutations}. 
%%%%%%%%%%%%%%%%%%%%%%%%%%%%%%%%%%%%%%%%%%%%%%%%%%%%%%%%%%%%%%%%%%%%%%%%%%%%%%%%%
\begin{figure}[htb]
{\small
\[
\begin{array}{rr}
 ${\bf (1,2,3,4)}    (5,8,13,15)(6,10,12,16)(7,9,11,14)$\,,    &
 ${\bf (3,4)   }(5,14)(6,15)(7,16)(8,9)(12,13)$\,,          \\
 ${\bf  (1,2,4,3)}   (5,16,8,11)(6,14,9,12)(7,15,10,13)$\,,  &
 ${\bf  (2,3)}   (5,7)(8,10)(11,15)(12,14)(13,16)$\,,       \\
 ${\bf  (1,3,2,4)}   (5,9,15,12)(6,8,14,13)(7,10,16,11)$\,,  &
 ${\bf  (1,3)}   (6,7)(8,15)(9,16)(10,14)(11,12)$\,,        \\
 ${\bf(1,4,2,3)}   (5,12,15,9)(6,13,14,8)(7,11,16,10)$\,,    &
 ${\bf(1,2)}   (5,6)(8,12)(9,13)(10,11)(14,15)$\,,          \\
 ${\bf (1,3)(2,4)}   (5,13)(6,12)(7,11)(8,15)(9,14)(10,16)$\,,      &
 ${\bf (2,4)}   (5,13)(6,11)(7,12)(9,10)(14,16)$\,,      \\
 ${\bf (1,4)(2,3)}   (5,8)(6,9)(7,10)(11,16)(12,14)(13,15)$\,,  &
 ${\bf (1,4)}   (5,10)(6,9)(7,8)(11,13)(15,16)$\,,      \\
 ${\bf (1,4,3)}   (5,14,10)(6,16,8)(7,15,9)(11,13,12)$\,,      &
 ${\bf (2,3,4)}   (5,12,16)(6,11,15)(7,13,14)(8,9,10)$\,,     \\
 ${\bf (1,2,3)}   (5,7,6)(8,11,14)(9,13,16)(10,12,15)$\,,      &
 ${\bf (1,2,4)}   (5,9,11)(6,10,13)(7,8,12)(14,16,15)$\,,     \\
 ${\bf (2,4,3)}   (5,16,12)(6,15,11)(7,14,13)(8,10,9)$\,,     &
 ${\bf (1,3,4)}   (5,10,14)(6,8,16)(7,9,15)(11,12,13)$\,,     
\end{array}
\] }
\caption{Monodromy permutations for Burmester 5-0.}\label{burm50Permutations}
\end{figure}
%%%%%%%%%%%%%%%%%%%%%%%%%%%%%%%%%%%%%%%%%%%%%%%%%%%%%%%%%%%%%%%%%%%%%%%%%%%%%%%%%
Cycles involving the first four solutions are in {\bf boldface}, to help see that these
solutions are permuted amongst themselves while the other twelve solutions are permuted
amongst themselves.  
This shows that the Galois group of each component and of their union 
is also~$S_4$. 
For the off-diagonal component, it is the action of $S_4$ on ordered pairs of numbers $\{1,2,3,4\}$.

The remaining three cases under consideration are $(M,N) = (4,2),(3,4),(2,6)$ 
which have $60$, $402$, and $2224$ isolated solutions, respectively \cite[Table~1]{AltBurmester}.
In each, the left-right symmetry of the mechanism ($r=1,2$ above) implies that the Galois group of a problem
with $k=2m$ solutions will be a subgroup of the group $S_2\Wr S_m$ of order $2^m m!$.
We applied the Branch Point Algorithm first to the Alt-Burmester problem with $M=4$ and $N=2$. 
We tracked a loop around each of the 2094 branch points to compute generators of the Galois group, 
thereby showing the Galois group has order 
\[
  284\,813\,089\,515\,958\,324\,736\,640\,819\,941\,867\,520\,000\,000\  =\ 2^{30} \cdot 30!\,,
\]
and is thus the full wreath product $S_2\Wr S_{30}$.
This Galois group is the largest it could be given the left-right symmetry. 

For each of the cases when $(M,N)$ is $(3,4)$ and $(2,6)$ we computed ten random permutations,
which was sufficient to show that the Galois groups of these problems are indeed equal to 
$S_2\Wr S_{201}$ and $S_2\Wr S_{1112}$ having order
$2^{201}\cdot 201! \approx 5\cdot 10^{437}$ and 
$2^{1112}\cdot 1112! \approx 10^{3241}$, respectively.

%%%%%%%%%%%%%%%%%%%%%%%%%%%%%%%%%%%%%%%%%%%%%%%%%%%%%%%%%%%%%%%%%%%%%%%%%%%%%%%%%
\subsection{Algebraic statistics example}
Maximum likelihood estimation on a discrete algebraic statistical model $\cM$ involves maximizing the
likelihood function $\defcolor{\ell_u(p)}:=p_0^{u_0}p_1^{u_1}\cdots p_n^{u_n}$ for data consisting of positive
integers $u_0,\dots,u_n$ restricted to the model. 
The model $\cM$ is defined by  polynomial equations in the probability
%
% A polynomial equation does not vanish, a polynomial does.
%
simplex, which is the subset of $\bR^{n+1}$ where $p_0+\dotsb+p_n=1$ and $p_i\geq 0$.
We consider the  Zariski closure of $\cM$ in $\bP^n$ (also written~$\cM$), as $p_0+\cdots+p_n=1$ defines an
affine open subset of $\bP^n$.

The variety $V$ of critical points of $\ell_u$ on the model $\cM$ lies in $\bP^n_p\times\bP^n_u$.
This is the Zariski closure of points $(p,u)$ where $p$ is a smooth point of $\cM$ and a critical point of
$\ell_u$.  
Then $V$ is $n$-dimensional  and irreducible, and its projection to $\bP^n_u$ gives a branched cover
whose degree is the \demph{maximum likelihood degree}~\cite{HKS05,HS14}. 
For an algebraic statistical model, we can ask for the Galois group of this maximum likelihood estimation (the
branched cover $V\to\bP^n$). 

The model defined by the determinant~\eqref{rank2Sym}
has maximum likelihood degree $6$, and has a Galois group that is a proper subgroup of the full symmetric
group $S_6$~\cite{HRS14}. 
 \begin{equation}\label{rank2Sym}
   \det\;\begin{pmatrix}2p_{11}&p_{12}&p_{13}\\
          p_{21}&2p_{22}&p_{23}\\p_{13}&p_{23}&2p_{33}\end{pmatrix}\ =\ 0\,.
 \end{equation}
Using the Branch Point Algorithm, we solve a system of equations to find $24$ critical points of the projection (note the difference between critical points of the likelihood function and critical points of the projection).
The critical points of the projection map $2$ to $1$ to a set of $12$ branch points
yielding a witness set for the branch point locus which is a component of the data discriminant\footnote{
The defining polynomial of degree $12$ was computed in~\cite[Ex.~6]{RT15} and is available at
the website
\url{https://sites.google.com/site/rootclassification/publications/DD}.}.
The Branch Point Algorithm finds the following generating set of the Galois group,
which has order $4! = 24$ and is isomorphic to $S_4$:
\[
  \left\{\begin{array}{cccccc}
   % 
   %   If a,b,c,d are permuted by S_4, and we set
   %    1 := {b,c}, 2 := {a,c}, 6 := {a,b}, and 
   %    3 := {a,d}, 4 := {b,d}, 5 := {c,d}, then
   %                Permutation of {a,b,c,d}
   (12)(34), &   %  (a,b)  
   (26)(45), &   %  (b,c)
   (14)(23), &   %  (c,d)
   (15)(36), &   %  (b,d)
   (16)(35), &   %  (a,c)
   (126)(345)    %  (a,b,c)
  \end{array}\right\}\,.
\]
The reason for the interesting Galois group is explained by maximum likelihood duality \cite{DR14}. 
Moreover, in \cite[\S~5]{RT15}, it is shown that over a real data point, a typical fiber has 
either~$2$~or~$6$ real points. 
This further strengthens the notion that degenerate Galois groups can help 
identify the possibility of interesting real structures. 

%%%%%%%%%%%%%%%%%%%%%%%%%%%%%%%%%%%%%%%%%%%%%%%%%%%%%%%%%%%%%%%%%%%%%%%%%%%%%%%%%
\section{Conclusion}\label{S:conclusion}

We have given algorithms to compute Galois groups.
The main contributions are two numerical algorithms [Algorithm \ref{BranchPoint} and \ref{Al:orbit}], that
allow for practical computation of Galois groups.  
The first algorithm, the Branch Point Algorithm, has been implemented in 
{\tt Macaulay2} building on monodromy computations performed by
{\tt Bertini} and is publicly available\footnote{\url{http://home.uchicago.edu/~joisro/quickLinks/NCGG/}}. 
Moreover, we have shown its effectiveness in examples ranging from enumerative geometry, kinematics, and statistics. 
The other algorithm uses fiber products to test for $s$-transitivity.
This is practical as permutation groups that are not alternating or symmetric are at most $5$-transitive (and
$k\leq 24$).
These two algorithms demonstrate that homotopy continuation can be used to compute Galois groups. 

%%%%%%%%%%%%%%%%%%%%%%%%%%%%%%%%%%%%%%%%%%%%%%%%%%%%%%%%%%%%%%%%%%%%%%%%%%%%%%%%%%%%%%%%%%%%%%%%%%%%%%%%%%%%
%
\bibliographystyle{plain}

%\bibliography{ggbibl}

\end{document}